\definecolor{webgreen}{rgb}{0,.5,0}
\definecolor{webbrown}{rgb}{.6,0,0}
\newcommand{\seqnum}[1]{\href{http://oeis.org/#1}{\underline{#1}}}
\newcommand\BD{\mathrm{B}}
\newcommand\SD{\mathrm{S}}
\theoremstyle{plain}
\newtheorem{theorem}{Theorem}
\newtheorem{corollary}[theorem]{Corollary}
\newtheorem{lemma}[theorem]{Lemma}
\newtheorem{proposition}[theorem]{Proposition}
\theoremstyle{remark}
\begin{document}

\begin{center}
\vskip 1cm{\LARGE\bf Star of David and other patterns in the Hosoya-like polynomials triangles}
\vskip 1cm
\large
Rigoberto Fl\'orez\\
Department of Mathematics and Computer Science\\
The Citadel\\
Charleston, SC \\
U.S.A.\\
{\tt rigo.florez@citadel.edu} \\
\ \\
Robinson A. Higuita\\
Instituto de Matem\'aticas\\
Universidad de Antioquia\\
Medell\'in\\
Colombia\\
{\tt robinson.higuita@udea.edu.co}\\
\ \\
Antara Mukherjee\\
Department of Mathematics and Computer Science\\
The Citadel\\
Charleston, SC \\
U.S.A.\\
{\tt antara.mukherjee@citadel.edu}

\end{center}

\vskip .2 in

\begin{abstract}

In this paper we first generalize the numerical recurrence relation given by Hosoya to polynomials. Using this generalization we construct a
Hosoya-like triangle for polynomials, where its entries are products of generalized Fibonacci polynomials (GFP).
Examples of GFP are: Fibonacci polynomials, Chebyshev polynomials, Morgan-Voyce polynomials, Lucas polynomials, Pell polynomials,
Fermat polynomials, Jacobsthal polynomials, Vieta polynomials and other familiar sequences of polynomials. For every choice of a GFP we obtain a
triangular array of polynomials. In this paper we extend  the star of David property, also called the Hoggatt-Hansell identity, to this type of triangles.
We also establish the star  of David property in the gibonomial triangle.
In addition, we study other geometric patterns in these triangles and as a consequence we give geometric interpretations for the Cassini's identity, Catalan's identity, and other identities for Fibonacci polynomials.
\end{abstract}

\section {Introduction}

The \emph{generalized Fibonacci polynomial} (GFP) is a recursive polynomial sequence that generalizes
the Fibonacci numbers sequence. Familiar examples of GFP are Fibonacci polynomials, Chebyshev polynomials,
Morgan-Voyce polynomials, Lucas polynomials, Pell polynomials, Fermat polynomials, Jacobsthal polynomials, Vieta polynomials,
and other familiar sequences of polynomials. Most of the polynomials mentioned here may be found in \cite{koshy, koshy1}.

The \emph{Hosoya triangle}, formerly called the Fibonacci triangle,  \cite{florezHiguitaMuk, florezjunes, hosoya, koshy}, consists of a
triangular array of numbers where each entry is a product of two Fibonacci numbers (see \seqnum{A058071}).
In this triangle if we replace Fibonacci numbers with the corresponding GFP, we obtain the Hosoya like polynomial triangles (see Tables \ref{tabla1} and \ref{tabla_equivalent}).
For brevity we call these triangles the Hosoya polynomial triangles and if there is any ambiguity we call them Hosoya triangles.
 Therefore, for every choice of GFP we obtain a distinct Hosoya polynomial triangle.
So, every polynomial evaluation gives rise to a  numerical triangle (see Table \ref{Tableahosoyatriangles}).
In particular the classic Hosoya triangle can be  obtained by evaluating the entries of Hosoya polynomial
triangle at $x=1$ when they are Fibonacci polynomials.

The Hosoya polynomial triangle provides a good geometry to study algebraic and combinatorial
properties of products of recursive sequences of polynomials. In this paper we study some of its geometric properties.
Note that any geometric property in this triangle is automatically true for the classic (numerical) Hosoya triangle.

A hexagon gives rise to the star of David --connecting its alternating vertices with a
continuous line-- as in Figure \ref{starofDavidF} part (d) on page~\pageref{starofDavidF}.
Given a hexagon in a Hosoya polynomial triangle can we determine whether the vertices of
the two triangles of the star of David have the same greatest common  divisor (GCD)? If
both GCD's are equal, then we say that the star of David has the \emph{GCD property}.
Several authors have been interested in this property, see for example
\cite{florezHiguitaJunes,florezjunes,hoggatt_star_david,koshygibonomial,Korntved,Long, sun}.
For instance, in 2014 Fl\'{o}rez et. al. \cite{florezHiguitaJunesGCD} proved the Star of
David property in the generalized Hosoya triangle. Koshy \cite{koshygibonomial, koshy1} defined the
gibonomial triangle and proved one of the fundamental properties of the star of David in this triangle.
In this paper we establish the GCD property of the star of David for the gibonomial triangle.

Since every polynomial that satisfies the definition of GFP gives rise to a Hosoya polynomial triangle,
the above question seems complicated to answer.  We prove that the star of David property holds for
most of the cases (depending on the locations of its points in the Hosoya polynomial triangle).
We also prove that if the star of David does not hold, then the two GCD's are proportional.
We give a characterization of the members of the family of Hosoya polynomial triangles that satisfy the
star of David property. From Table \ref{equivalent}, we obtain a sub-family of fourteen distinct Hosoya
polynomial triangles. We provide a complete classification of the members that satisfy the star of David property.

We also study other geometric properties that hold in a Hosoya polynomial triangle, called the
\emph{rectangle property} and the \emph{zigzag property}. A rectangle in the  Hosoya polynomial triangle is a
set of four points in the triangle that are arranged as the vertices of a rectangle. Using the
rectangle property we give geometric interpretations and proofs of the Cassini, Catalan, and
Johnson identities for GFP.

 \section{Generalized Fibonacci polynomials GFP}\label{General:Fibonacci:Polynomial}

In this section we summarize the definition of the generalized Fibonacci polynomial given by the authors in an earlier article, \cite{florezHiguitaMukCharact}.
The \emph{generalized Fibonacci polynomial} sequence, denoted by GFP, is defined by the following recurrence relation
\begin{equation}\label{Fibonacci;general}
G_0(x)=p_0(x), \; G_1(x)= p_1(x),\;  \text{and} \;  G_{n}(x)= d(x) G_{n - 1}(x) + g(x) G_{n - 2}(x) \text{ for } n\ge 2
\end{equation}
where $p_0(x)$ is a constant and $p_1(x)$, $d(x)$, and $g(x)$ are non-zero polynomials in $\mathbb{Z}[x]$ with $\gcd(d(x), g(x))=1$.
Some familiar examples of GFP are in Table \ref{equivalent} (also see \cite{florezHiguitaMukCharact, florezHiguitaMuk, Pell,Fermat,koshy}).

A sequence given by (\ref{Fibonacci;general}) is called \emph{Lucas type} or \emph{first type} if $2p_{1}(x)=p_{0}(x)d(x)$ with  $p_{0}\ne 0$, and a sequence given by
(\ref{Fibonacci;general}) is called \emph{Fibonacci type} or \emph{second type} if $p_{0}(x)=0$ with  $p_{1}(x)$ a constant, however in this paper we consider $p_{1}(x)$ to be $1$.
We use the notation $G_n^{*}(x)$ when $G_n(x)$ is of Lucas type and $G_n^{'}$ when $G_n(x)$ is of Fibonacci type.
Using these definitions of Lucas type and Fibonacci type polynomials the authors \cite{florezHiguitaMukCharact} found closed formulas for the
GFP that are similar to Binet formulas for the classical numerical sequences like Fibonacci and Lucas numbers.

If $d^2(x)+4g(x)> 0$, then the explicit formula for the recurrence relation (\ref{Fibonacci;general}) is given by
\begin{equation}\label{solutionrecurrencerelationuno}
 G_{n}(x) = t_1 a^{n}(x) + t_2 b^{n}(x)
\end{equation}
where $a(x)$ and $b(x)$ are the solutions of the quadratic equation associated to the second order
recurrence relation $G_{n}(x)$. That is,  $a(x)$ and $b(x)$ are the solutions of $z^2-d(x)z-g(x)=0$
(for details on the construction of Binet formulas see \cite{florezHiguitaMukCharact}).
So, the Binet formula for the GFP of Lucas type is
\begin{equation}\label{bineformulados}
L_n(x)=\frac{a^{n}(x)+b^{n}(x)}{\alpha}
\end{equation}
where $\alpha=2/p_{0}(x)$. The Binet formula for the GFP of Fibonacci type when $p_1(x)=1$ is
\begin{equation}\label{bineformulauno}
R_n(x)=\frac{a^{n}(x)-b^{n}(x)}{a(x)-b(x)}.
\end{equation}
Note that $a(x)+b(x)=d(x)$, $a(x)b(x)= -g(x)$, and $a(x)-b(x)=\sqrt{d^2(x)+4g(x)}$ where
$d(x)$ and $g(x)$ are the polynomials defined in \eqref{Fibonacci;general}.
For the sake of simplicity, throughout this paper we use $a$ in place of $a(x)$ and $b$ in place of $b(x)$.

A GFP sequence of Lucas (Fibonacci) type is \emph{equivalent} or \emph{conjugate} to a sequence of the Fibonacci (Lucas) type,
if their recursive sequences are determined by the same polynomials $d(x)$ and $g(x)$. Notice that two equivalent polynomials
have the same $a(x)$ and $b(x)$ in their Binet representations. Examples of equivalent polynomials are in Table \ref{equivalent}.
Note that the leftmost polynomials in Table \ref{equivalent} are of the Lucas type and their equivalent Fibonacci type polynomials
are in the third column on the same line.

 For most of the proofs involving GFP of Lucas type it is required that
 $\gcd(p_0(x), p_1(x))=1$, $\gcd(p_0(x), d(x))=1$, $\gcd(p_0(x), g(x))=1$, and $\gcd(d(x), g(x))=1$.
Therefore, for the rest the paper we suppose that these four mentioned conditions hold for all GFP.
We use $\rho$ to denote $\gcd(d(x),G_1(x))$. Notice that in the definition Pell-Lucas we have that
$p_0(x)=2$ and $p_1(x)=2x$. Thus, the $\gcd(p_0(x),p_1(x))\ne 1$.
Therefore, Pell-Lucas does not satisfy the extra conditions that we just imposed for Generalized Fibonacci polynomial.
To solve this inconsistency we define \emph{Pell-Lucas-prime} as follows:
\[Q_0'(x)= 1, \; Q_1'(x)= x, \; \text{and} \;  Q_{n}^{'}(x)= 2x Q_{n - 1}^{'}(x) + Q_{n - 2}^{'}(x) \text{ for } n\ge 2.\]
It is easy to see  that $2Q_{n}^{'}(x)=Q_{n}(x)$. Fl\'orez, Junes, and Higuita  \cite{florezHiguitaJunes}, have worked on similar problems for
numerical sequences.

\begin{table} [h]
\begin{center}\scalebox{0.8}{
\begin{tabular}{|lc|lc|l|l|} \hline
   Polynomial  	    & $L_n(x)$    	     &Polynomial of 	&$R_n(x)$   &$a(x)$ 	              	& $b(x)$			\\	
   Lucas type 	 	& 				     & Fibonacci type  	&			& 					        &      				\\ \hline \hline
   Lucas 			&$D_n(x)$     		 &Fibonacci 		&$F_n(x)$ 	&  $(x+\sqrt{x^2+4})/2$     & $(x-\sqrt{x^2+4})/2$ \\ 						
   Pell-Lucas 		&$Q_n(x)$	 	     &Pell				& $P_n(x)$  &  $x+\sqrt{x^2+1}$	      	& $x-\sqrt{x^2+1}$        \\
   Fermat-Lucas 	& $\vartheta_n(x)$	 & Fermat 			& $\Phi_n(x)$ &  $(3x+\sqrt{9x^2-8})/2$ & $(3x-\sqrt{9x^2-8})/ 2$ \\
   Chebyshev first kind& $T_n(x)$ 		 &Chebyshev second kind&$U_n(x)$ &  $x +\sqrt{x^2-1}$      	& $x -\sqrt{x^2-1}$       \\
   Jacobsthal-Lucas	& $j_n(x)$	   		 &Jacobsthal  		& $J_n(x)$   &  $(1+\sqrt{1+8x})/2$    	& $(1-\sqrt{1+8x})/2$      \\
   Morgan-Voyce 	&$C_n(x)$ 	   	     &Morgan-Voyce	    & $B_n(x)$ 	 &  $(x+2+\sqrt{x^2+4x})/2$ & $(x+2-\sqrt{x^2+4x})/2$  \\
   Vieta-Lucas 		&$v_n(x)$ 	   	     &Vieta	            & $V_n(x)$ 	 &  $(x+\sqrt{x^2-4})/2$    & $(x-\sqrt{x^2-4})/2$     \\   \hline
\end{tabular}}
\end{center}
\caption{$R_n(x)$ equivalent to $L_n(x)$.} \label{equivalent}
\end{table}

\section{Divisibility properties of GFP}

In this section we prove a few divisibility and $\gcd$ properties that are true for all GFP. These
results will be used in a section later on to prove the main results of this paper.
Lemma \ref{prop2;1} is a generalization of \cite[Proposition 2.2]{florezjunes}, both proofs are similar. The reader can therefore update the proof in the
afore-mentioned paper to obtain the proof of Lemma \ref{prop2;1}.

\begin{lemma}\label{prop2;1}
Let $p(x), q(x), r(x),$ and $s(x)$ be polynomials.
\begin{enumerate}[(1)]
  \item If $\gcd(p(x),q(x))=\gcd(r(x),s(x))=1$, then $$\gcd(p(x)q(x),r(x)s(x))=\gcd(p(x),r(x))\gcd(p(x),s(x))\gcd(q(x),r(x))\gcd(q(x),s(x)).$$
  \item If $\gcd(p(x),r(x))=1$ and $\gcd(q(x),s(x))=1$, then
     $$\gcd(p(x)q(x),r(x)s(x))=\gcd(p(x),s(x))\gcd(q(x),r(x)).$$
\end{enumerate}
\end{lemma}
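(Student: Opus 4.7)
My plan is to prove both identities by comparing the $\pi$-adic valuation $v_\pi$ of each side at every irreducible $\pi\in\mathbb{Z}[x]$, using that $\mathbb{Z}[x]$ is a unique factorization domain and that $v_\pi(\gcd(f,g))=\min(v_\pi(f),v_\pi(g))$ while $v_\pi(fg)=v_\pi(f)+v_\pi(g)$. Since agreement at every $v_\pi$ determines an element of $\mathbb{Z}[x]$ up to a unit (sign), this reduces both parts to a finite case-check on nonnegative integers.

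For part~(2), fix an irreducible $\pi$ and set $a=v_\pi(p)$, $b=v_\pi(q)$, $c=v_\pi(r)$, $d=v_\pi(s)$. The hypotheses $\gcd(p,r)=\gcd(q,s)=1$ become $\min(a,c)=0$ and $\min(b,d)=0$, so there are four cases according to which of $\{a,c\}$ and which of $\{b,d\}$ vanishes. In each case one verifies by inspection that
\[
\min(a+b,\ c+d)\;=\;\min(a,d)+\min(b,c),
\]
which is exactly $v_\pi$ of the claimed identity.

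For part~(1), the hypotheses become $\min(a,b)=0$ and $\min(c,d)=0$, yielding the four subcases ``$a=0$ or $b=0$'' crossed with ``$c=0$ or $d=0$.'' In each subcase exactly one of the four factors $\gcd(p,r)$, $\gcd(p,s)$, $\gcd(q,r)$, $\gcd(q,s)$ carries a nonzero $\pi$-valuation, equal to $\min(a+b,c+d)$, while the other three contribute $0$; summing the contributions of all $\pi$ (i.e., multiplying the local factors) reproduces the right-hand side.

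There is no serious obstacle: the whole argument is routine bookkeeping at each irreducible, which is why the authors can point to the parallel numerical statement in \cite{florezjunes} and leave the polynomial version to the reader. The only subtlety worth flagging is that the gcd in $\mathbb{Z}[x]$ is defined only up to the units $\pm 1$, so the equalities should be read modulo sign; this does not affect any divisibility consequence drawn later in the paper.
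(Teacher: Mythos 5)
Your valuation-at-each-irreducible argument is correct (all eight min-identities check out), and it is essentially the intended proof: the paper omits the argument, deferring to the numerical analogue in \cite[Proposition 2.2]{florezjunes}, whose standard proof is exactly this prime-by-prime computation of $\min$'s transplanted from $\mathbb{Z}$ to the UFD $\mathbb{Z}[x]$. Your remark that gcd's in $\mathbb{Z}[x]$ are only determined up to the unit $\pm1$ is a fair and harmless caveat.
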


\begin{proposition}\label{modulo:dx} If $\{G_{t}(x)\}$  is a  GFP sequence, then
 \[
 G_m(x) \bmod d^2(x)\equiv
 \begin{cases}
         g^{k-1}(x)\left(kd(x)G_{1}(x)+g(x)G_{0}(x)\right), & \mbox{if $m=2k$;} \\
         g^{k}(x)\left(kd(x)G_{0}(x)+G_{1}(x)\right), & \mbox{if $m=2k+1$.}
\end{cases}
 \]
\end{proposition}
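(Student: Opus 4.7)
The natural approach is induction on $m$, exploiting the second-order recurrence $G_{m+2}(x) = d(x)G_{m+1}(x) + g(x)G_m(x)$ together with the key observation that any term carrying a factor of $d^2(x)$ collapses to $0$ modulo $d^2(x)$. Because the claim partitions into an even case and an odd case, the cleanest strategy is to prove both simultaneously by induction on $k$, using the two-step recurrence $G_{m+2}(x) \equiv d(x)G_{m+1}(x) + g(x)G_m(x) \pmod{d^2(x)}$ to pass from the pair $(G_{2k}, G_{2k+1})$ to the pair $(G_{2k+2}, G_{2k+3})$.

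For the base cases I would verify $m=1$ (trivially $G_1(x)$) and $m=2$ (directly from the recurrence, $G_2(x) = d(x)G_1(x) + g(x)G_0(x)$, which matches the even formula at $k=1$). For the inductive step, assume both formulas hold for $m = 2k$ and $m = 2k+1$. To get the formula at $m=2k+2$, reduce
\[
G_{2k+2}(x) \equiv d(x)G_{2k+1}(x) + g(x)G_{2k}(x) \pmod{d^2(x)}
\]
by substituting the inductive hypotheses; the contribution $d(x)\cdot kd(x)G_0(x)\cdot g^k(x)$ vanishes modulo $d^2(x)$, and collecting the surviving terms gives $g^k(x)\bigl((k+1)d(x)G_1(x) + g(x)G_0(x)\bigr)$, precisely the even formula with $k$ replaced by $k+1$. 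An analogous reduction applied to $G_{2k+3}(x) \equiv d(x)G_{2k+2}(x) + g(x)G_{2k+1}(x)$ yields the odd case at $k+1$: the $d^2$-divisible pieces of $d(x)G_{2k+2}(x)$ drop out, and the arithmetic collapses to $g^{k+1}(x)\bigl((k+1)d(x)G_0(x) + G_1(x)\bigr)$.

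There is no real obstacle here beyond careful bookkeeping of the $d(x)$- and $g(x)$-powers; the inductive step is essentially the observation that multiplying the leading $kd(x)G_1(x)$ (resp.\ $kd(x)G_0(x)$) term by another $d(x)$ kills it modulo $d^2(x)$, which is exactly why the linear-in-$k$ coefficient propagates in the predicted way. One could alternatively give a slicker proof using the Binet formulas \eqref{bineformulados} and \eqref{bineformulauno} by writing $a = d/2 + \sqrt{d^2+4g}/2$, $b = d/2 - \sqrt{d^2+4g}/2$ and expanding $a^m + b^m$ and $(a^m-b^m)/(a-b)$ modulo $d^2$, but that route requires the discriminant hypothesis $d^2(x)+4g(x) > 0$ and extra care with the common factor $\alpha = 2/p_0(x)$ in the Lucas case, whereas the induction argument above works uniformly for every GFP sequence.
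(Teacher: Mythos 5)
Your proposal is correct and follows essentially the same route as the paper: induction with base cases $m=1,2$ and the two-step recurrence, where the term carrying $d^2(x)$ vanishes modulo $d^2(x)$ (the paper writes out the even step $m+1=2k+2$ and omits the odd one as similar, exactly as you describe). The computations you sketch check out, so no gap here.
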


\begin{proof} We use mathematical induction. Let $S(m)$ be the statement
 \[
 G_m(x) \bmod d^2(x)\equiv
 \begin{cases}
         g^{t-1}(x)\left(td(x)G_{1}(x)+g(x)G_{0}(x)\right), & \mbox{if $m=2t$;} \\
         g^{t}(x)\left(td(x)G_{0}(x)+G_{1}(x)\right), & \mbox{if $m=2t+1$.}
\end{cases}
 \]

It is easy to see that
\[ G_{1}(x)\equiv G_{1}(x)=g^{0}(x)\left(0 d(x)G_{0}(x)+G_{1}(x)\right) \bmod d^2(x)\]
and
\[G_{2}(x)\equiv G_{2}(x)=g^{0}(x)\left(d(x)G_{1}(x)+g(x)G_{0}(x)\right) \bmod d^2(x).\]
This proves $S(1)$ and $S(2)$.

We suppose that $S(m)$ is true for $m=2k$ and $m=2k+1$.
The proof of $S(m+1)$ requires two cases, we prove the case for $m+1=2k+2$, the case $m+1=2k+3$ is similar and we omit it.
We know that $G_{m+1}(x)=d(x)G_{m}(x)+g(x)G_{m-1}(x)$. Thus,
$G_{2k+2}(x)=d(x)G_{2k+1}(x)+g(x)G_{2k}(x)$. This and the inductive hypothesis imply that $G_{2k+2}(x) \bmod d^2(x)$ is
\[ d(x) \left[g^{k}(x)\left(kd(x)G_{0}(x)+G_{1}(x)\right)\right]+g(x)\left[g^{k-1}(x)\left(kd(x)G_{1}(x)+g(x)G_{0}(x)\right) \right]. \]
Simplifying we obtain,
 \[G_{2(k+1)}(x) \equiv g^{k}(x)\left((k+1)d(x)G_{1}(x)+g(x)G_{0}(x)\right) \bmod d^2(x).\]
This completes the proof.
\end{proof}

\begin{lemma} [\cite{florezHiguitaMukCharact}] \label{gcddistance1;2} Let $m$ and $n$ be positive integers.
	If $ G_{n}(x)$ is a GFP of either Lucas or Fibonacci type, then

\begin{enumerate}[(1)]	
	 \item $\gcd(d(x), G_{2n+1}(x))=G_1(x)$ for every positive integer $n$.
	
	\item If the GFP is of Lucas type, then $\gcd(d(x), G_{2n}^{*}(x))= 1$ and
	
	if the GFP is of Fibonacci type, then $\gcd(d(x), G_{2n}^{\prime}(x))= d(x)$.
	
	\item $\gcd(g(x), G_n(x))=\gcd(g(x), G_{1}(x))=1,$ for every positive integer $n$.
 \item If $0<|m-n|\le 2$ and  $\{G_{t}^{*}(x)\}$  is a  GFP of Lucas type, then
 \[
 \gcd(G_m^{*}(x),G_n^{*}(x))=
 \begin{cases}
         G_{1}^{*}(x), & \mbox{if $m$ and $n$ are both odd;} \\
         1, & \mbox{otherwise. }
\end{cases}
 \]

   \item If $0<|m-n|\le 2$ and $\{G_{t}^{\prime}(x)\}$  is a  GFP of Fibonacci type, then
 \[
 \gcd(G_m^{\prime}(x),G_n^{\prime}(x))=
 \begin{cases}
        G_{2}^{\prime}(x) & \mbox{if $m$ and $n$ are both even;} \\
         1, & \mbox{otherwise. }
\end{cases}
 \]

\end{enumerate}
\end{lemma}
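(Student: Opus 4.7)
The plan is to establish the five parts in the order given, using Proposition~\ref{modulo:dx} as the main engine for (1)--(3) and then bootstrapping (4)--(5) from the earlier parts together with Lemma~\ref{prop2;1}. Throughout the plan I will rely on the four coprimality hypotheses listed just before the Pell-Lucas-prime definition, namely $\gcd(p_0,p_1)=\gcd(p_0,d)=\gcd(p_0,g)=\gcd(d,g)=1$, and on the observation that for the Lucas type one has $G_1(x)=p_0(x)d(x)/2$, so $G_1(x)\mid d(x)$, while for the Fibonacci type $G_1(x)=1$ and $G_0(x)=0$.

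For (1), reduce Proposition~\ref{modulo:dx} modulo $d(x)$ to get $G_{2k+1}(x)\equiv g^{k}(x)G_1(x)\pmod{d(x)}$. Since $\gcd(d,g)=1$, this gives $\gcd(d(x),G_{2k+1}(x))=\gcd(d(x),G_1(x))$. In the Lucas case $G_1\mid d$ yields $G_1(x)$; in the Fibonacci case $G_1=1$ yields $1=G_1(x)$. For (2), do the same reduction on the even-index line to obtain $G_{2k}(x)\equiv g^{k}(x)G_0(x)\pmod{d(x)}$; for Lucas type $G_0=p_0$ is coprime to both $d$ and $g$, giving $1$; for Fibonacci type $G_0=0$, giving divisibility by $d(x)$ and hence $\gcd=d(x)$. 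For (3), I would induct on $n$: the base cases $n=0,1$ use the four coprimality hypotheses directly (and $\gcd(g,p_1)=1$ follows from $p_1=p_0 d/2$ together with $\gcd(g,p_0)=\gcd(g,d)=1$ in the Lucas case, and $p_1=1$ in the Fibonacci case), and the inductive step uses $G_n=dG_{n-1}+gG_{n-2}$ to write $\gcd(g,G_n)=\gcd(g,dG_{n-1})=1$ via $\gcd(g,d)=1$ and the inductive hypothesis.

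For (4) and (5) I would handle the two distance cases separately. When $|m-n|=1$, the recurrence gives $\gcd(G_m,G_{m+1})=\gcd(G_m,gG_{m-1})$, and part (3) removes the $g$ factor, reducing to $\gcd(G_m,G_{m-1})$; iterating drops down to $\gcd(G_1,G_0)$, which equals $\gcd(p_1,p_0)=1$ in both types (noting $G_0=0$ in the Fibonacci case forces $\gcd=G_1=1$). Since consecutive indices have opposite parities, this handles the ``otherwise'' conclusion in both (4) and (5). When $|m-n|=2$, write $G_{m+2}=dG_{m+1}+gG_m$, so $\gcd(G_m,G_{m+2})=\gcd(G_m,dG_{m+1})$; using the just-proved $\gcd(G_m,G_{m+1})=1$ together with Lemma~\ref{prop2;1}(2), this collapses to $\gcd(G_m,d(x))$. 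Now feed in (1) and (2): for Lucas type we get $G_1^{*}(x)$ when $m$ is odd (hence both $m,m+2$ odd) and $1$ when $m$ is even; for Fibonacci type we get $1$ when $m$ is odd and $d(x)=G_2^{\prime}(x)$ when $m$ is even (using $G_2^{\prime}=dG_1^{\prime}+gG_0^{\prime}=d$).

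The main obstacle is really bookkeeping: keeping the Lucas/Fibonacci distinction straight at each step, and verifying the small divisibility $G_1^{*}(x)\mid d(x)$ and the identification $G_2^{\prime}(x)=d(x)$, both of which silently convert the raw output of Proposition~\ref{modulo:dx} into the clean statements of the lemma. A secondary subtlety is the application of Lemma~\ref{prop2;1}(2) in the distance-two argument, which requires the coprimality $\gcd(G_m,G_{m+1})=1$ to already be in hand before one can split the gcd across the product $d(x)G_{m+1}(x)$; this is why the distance-one case must be proved first.
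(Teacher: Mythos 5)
Your proposal is correct, but it is worth noting that the paper itself offers no proof of this lemma at all: it is imported verbatim from the earlier paper \cite{florezHiguitaMukCharact} and used as a black box, so there is no internal argument to compare against. What you have written is a legitimate self-contained derivation from the tools that \emph{are} in this paper: reducing Proposition~\ref{modulo:dx} modulo $d(x)$ gives $G_{2k+1}\equiv g^kG_1$ and $G_{2k}\equiv g^kG_0$, and together with the standing hypotheses $\gcd(d,g)=\gcd(p_0,d)=\gcd(p_0,g)=\gcd(p_0,p_1)=1$ this yields (1)--(3); the descent $\gcd(G_m,G_{m+1})=\gcd(G_m,gG_{m-1})=\gcd(G_m,G_{m-1})$ settles the distance-one case, and $\gcd(G_m,G_{m+2})=\gcd(G_m,dG_{m+1})=\gcd(G_m,d)$ reduces the distance-two case to (1) and (2). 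Three small points deserve attention. First, the divisibility $G_1^{*}(x)\mid d(x)$ that you use in (1) and (4) requires $p_0\mid 2$ (so that $d=(2/p_0)G_1^{*}$ has an integer cofactor); the paper silently assumes this too (it uses $G_1^{*}(x)\mid d(x)$ in the proof of Theorem~\ref{gcdstarofdavid} and $\alpha=2/p_0$ in the Binet formula), and it holds for every entry of Table~\ref{equivalent}, but it should be stated. Second, your base case $n=0$ of the induction in (3) fails for the Fibonacci type, where $\gcd(g,G_0)=\gcd(g,0)=g$; this is harmless because the inductive step $\gcd(g,G_n)=\gcd(g,dG_{n-1})$ only consumes the index $n-1$, so starting the induction at $n=1$ suffices. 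Third, in the distance-two step the invocation of Lemma~\ref{prop2;1}(2) works only with the assignment $p=1$, $q=G_m$, $r=d$, $s=G_{m+1}$ (so that the required hypotheses are $\gcd(1,d)=1$ and $\gcd(G_m,G_{m+1})=1$); with the more natural-looking assignment one would need $\gcd(G_m,d)=1$, which is precisely what is being computed. With these details pinned down your argument is complete and arguably an improvement on the paper, which leaves the reader to chase the reference.
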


\section{Hosoya polynomial triangle} \label{HosoyaSection}

We now give a precise definition of both the Hosoya polynomial sequence and the Hosoya polynomial triangle.
Let $\delta(x)$, $\gamma(x)$, $p_{0}(x)$, and $p_{1}(x)$ be polynomials in $\mathbb{Z}[x]$. Then the \emph{Hosoya polynomial sequence} $\left\{H(r,k)\right\}_{r,k\ge 0}$ is defined
using the double recursion
\[ H(r,k)= \delta(x) H(r-1,k)+\gamma(x) H(r-2,k)\]
 and
 \[ H(r,k)= \delta(x) H(r-1,k-1)+\gamma(x) H(r-2,k-2)\]
where $ r>1 $ and $0\le k \le r-1$ with initial conditions
\[H(0,0)=p_0(x)^2; \quad H(1,0)=p_0(x)p_1(x); \quad H(1,1)=p_0(x)p_1(x); \quad H(2,1)=p_1(x)^2.\]
This sequence gives rise to the \emph{Hosoya polynomial triangle}, where the
entry in position $k$ (taken from left to right), of the $r${th} row is equal to $H(r,k)$ (see Table \ref{tabla1}).

\begin{table} [!ht] \small
\begin{center} \addtolength{\tabcolsep}{-3pt} \scalebox{.9}{
\begin{tabular}{ccccccccccc}
&&&&&                                                 $H(0,0)$                                                 &&&&&\\
&&&&                                        $H(1,0)$     &&     $H(1,1)$                                        &&&&\\
&&&                                $H(2,0)$    &&     $H(2,1)$     &&     $H(2,2)$                               &&&\\
&&                        $H(3,0)$   &&     $H(3,1)$     &&     $H(3,2)$      &&    $H(3,3)$                     &&\\
&              $H(4,0)$     &&     $H(4,1)$    &&     $H(4,2)$     &&     $H(4,3)$     &&     $H(4,4)$             &\\
      $H(5,0)$     &&    $H(5,1)$    &&     $H(5,2)$     &&     $H(5,3)$      &&    $H(5,4)$     &&    $H(5,5)$     \\
\end{tabular}}
\end{center}
\caption{Hosoya polynomial triangle.} \label{tabla1}
\end{table}

In this paper we are interested in the relationship between the points of the Hosoya polynomial triangle and the products of generalized Fibonacci polynomials.
Fl\'orez, Higuita, and Mukherjee \cite{florezHiguitaMuk}, proved Proposition \ref{lemma0} below which helped establish the mentioned relation. Thus, from
Proposition \ref{lemma0} we can see that Table \ref{tabla1} is equivalent to Table \ref{tabla_equivalent}. To complete the relation between Hosoya polynomial
triangle and GFP we  need $\delta(x)= d(x)$ and $\gamma(x)=g(x)$ where $d(x)$ and $g(x)$ are the polynomials defined in (\ref{Fibonacci;general})
and $\delta(x)$ and $\gamma(x)$ are the polynomials defined in the Hosoya polynomial sequence. So, for the rest of the paper we assume that
$\delta(x)= d(x)$ and $\gamma(x)=g(x)$. Note  that in Table \ref{tabla_equivalent}, for brevity, we use the notation $G_k$ instead of $G_k(x)$.

\begin{proposition}\label{lemma0} $H(r,k)= G_k(x)G_{r-k}(x)$.
\end{proposition}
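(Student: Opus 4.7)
The plan is to prove $H(r,k) = G_k(x) G_{r-k}(x)$ by strong induction on the row index $r$, exploiting the fact that the two Hosoya recurrences are precisely tailored to reproduce the GFP recurrence on the right-hand and the left-hand factor respectively.

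First I would dispose of the base rows $r = 0, 1, 2$. The entries $H(0,0)$, $H(1,0)$, $H(1,1)$, and $H(2,1)$ are exactly the initial conditions, which can be rewritten as $G_0 G_0$, $G_0 G_1$, $G_1 G_0$, and $G_1 G_1$, since $G_0(x) = p_0(x)$ and $G_1(x) = p_1(x)$. For the two remaining corner entries $H(2,0)$ and $H(2,2)$ of row two, I would apply the first and the second Hosoya recurrence respectively; in each case the expression reduces to $p_0(x)\bigl(d(x) p_1(x) + g(x) p_0(x)\bigr) = G_0(x) G_2(x)$, matching the claim.

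For the inductive step, fix $r \ge 3$ and assume $H(r',k') = G_{k'}(x) G_{r'-k'}(x)$ for every $r' < r$ and every admissible $k'$. I would split on the position of $k$ within the row. When $r - k \ge 2$, applying the first Hosoya recurrence and the inductive hypothesis yields
\[
H(r,k) = d(x) G_k G_{r-k-1} + g(x) G_k G_{r-k-2} = G_k\bigl(d(x) G_{r-k-1} + g(x) G_{r-k-2}\bigr) = G_k G_{r-k},
\]
where the last equality is the GFP recurrence applied to the second factor. When $k \ge 2$, applying the second Hosoya recurrence gives, symmetrically,
\[
H(r,k) = d(x) G_{k-1} G_{r-k} + g(x) G_{k-2} G_{r-k} = \bigl(d(x) G_{k-1} + g(x) G_{k-2}\bigr) G_{r-k} = G_k G_{r-k}.
\]
For $r \ge 3$ these two ranges together exhaust every admissible index: if $k \le 1$ then automatically $k \le r - 2$, and if $k \ge r - 1$ then automatically $k \ge 2$.

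The calculation is essentially bookkeeping, and no step is technically hard. The only spot requiring a little care is the base row $r = 2$, where one must both verify $H(2,0)$ and $H(2,2)$ directly from the recurrences and implicitly confirm that the two Hosoya recurrences agree at these entries; once that is done, each case of the inductive step collapses to a one-line application of the GFP relation to one of the two factors.
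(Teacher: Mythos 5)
Your proof is correct, and it is the standard double-recurrence induction that the paper itself omits, deferring instead to the analogous numerical argument in the cited reference (Proposition 1 of Fl\'orez--Higuita--Junes). The only cosmetic point worth noting is that the paper states the recurrences for $0\le k\le r-1$, so your use of the second recurrence at $k=r$ (e.g., for $H(2,2)$) relies on the obviously intended reading that each recurrence applies wherever its right-hand side is defined; your case split $r-k\ge 2$ versus $k\ge 2$ handles this correctly and covers every admissible entry.
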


The proof of this proposition is similar to the proof  of \cite[Proposition 1]{florezHiguitaJunesGCD} for numerical sequences.

\subsection{A coordinate system for the Hosoya polynomial triangle}\label{coordinatesystem}

If $P$ is a point in a Hosoya polynomial triangle, then it is clear that there are two unique positive integers $r$ and $k$
such that $r > k$ with $P=H(r,k)$. We call the ordered pair $(r,k)$ the \emph{rectangular coordinates} of the point $P$.
Fl\'{o}rez \emph{et al}. \cite{florezHiguitaJunesGCD}, introduced a more convenient system of coordinates for points in
the generalized Hosoya triangle. The mentioned coordinate system generalized naturally to Hosoya polynomial triangle. Thus,
from Proposition \ref{lemma0} it is easy to see that any diagonal of Table \ref{tabla_equivalent} is the collection of
all generalized Fibonacci polynomials multiplied by a particular $G_{n}(x)$. More precisely, an $n$th \emph{diagonal} in
the Hosoya polynomial triangle is the collection of all generalized Fibonacci polynomial multiplied by $G_n(x)$. We
distinguish between \emph{slash diagonals} and \emph{backslash diagonals}, with the obvious meaning. We write
$\SD(G_{n}(x))$ and $\BD(G_{m}(x))$ to mean the slash
diagonal and backslash diagonal, respectively. These two diagonals are

\[\SD(G_{n}(x))=\{ H(n+i,n) \}_{i=0}^{\infty} =\{  G_{n}(x)\,G_{i}(x) | i \in \mathbb{Z}_{\ge 0}\},  \]
and
\[\BD(G_{m}(x))=\{ H(m+i,i) \}_{i=0}^{\infty} = \{ G_{i}(x)\,  G_{m}(x) | i \in \mathbb{Z}_{\ge 0} \}. \]

Using this idea we can now associate an ordered pair of non-negative integers to every element of a
Hosoya polynomial triangle. If $P$ is a point in a Hosoya polynomial triangle, then
there are two polynomials $G_{m}(x)$ and  $G_{n}(x)$ such that $P
\in \BD(G_{m}(x)) \cap \SD(G_{n}(x))$. Thus, $P=G_{m}(x) \, G_{n}(x)$. Therefore, the point $P$ corresponds
to the pair $(m,n)$. It is clear that this correspondence is a bijection between points
of a Hosoya polynomial triangle and ordered pairs of non-negative integers. The pair $(m,n)$
is called the \emph{diagonal coordinates} of $P$. We use Proposition \ref{lemma0} to find the diagonal coordinates of a point $P$ represented in rectangular coordinates. Indeed, the  point $P=H(r,k)$ in rectangular coordinates is $P=(r,k)$. Since $H(r,k)= G_k(x)G_{r-k}(x)$, by Proposition \ref{lemma0} we have that the point $P$ in diagonal coordinates is $P=(k, r-k)$.

\begin{table} [!ht] \small
\begin{center} \addtolength{\tabcolsep}{-1pt} \scalebox{.9}{
\begin{tabular}{ccccccccccc}
&&&&&            $G_0 \, G_0$                             				                                             &&&&&\\
&&&&          $G_0\, G_1$ &&      $G_1 \,G_0$                                           		   	                  &&&&\\
&&&        $G_0\,G_2$   &&     $G_1\,G_1$     &&  $G_2\,G_0$                                                            &&&\\
&&       $G_0\,G_3$   &&     $G_1\,G_2$     &&  $G_2\,G_1$  &&   $G_3\,G_0$                     	                       &&\\
&     $G_0\,G_4$    &&    $G_1\,G_3$     &&   $G_2\,G_2$  &&   $G_3\,G_1$   &&  $G_4\,G_0$                                &\\
    $G_0\,G_5$   &&    $G_1\,G_4$      &&   $G_2\,G_3$   &&   $G_3\,G_2$   &&   $G_4\,G_1$   &&   $G_5\,G_0$                \\
\end{tabular}}
\end{center}
\caption{$H(r,k)= G_k(x)G_{r-k}(x)$.} \label{tabla_equivalent}
\end{table}

Some examples of $H(r,k)$ are in Table \ref{polynomialformula}, obtained from Table \ref{equivalent} using Proposition \ref{lemma0}.
Therefore, some examples of Hosoya Polynomial triangle can be constructed using Tables \ref{tabla_equivalent} and \ref{polynomialformula}.
It is enough to substitute each entry in Table \ref{tabla1} or Table \ref{tabla_equivalent} by the corresponding entry in
Table \ref{polynomialformula}. Thus, we obtain a Hosoya polynomial triangle for each of the specific polynomials mentioned
in Table \ref{equivalent}. So, Table \ref{polynomialformula} gives rise to 14 examples of Hosoya polynomial triangle.

For example, using the first polynomial in  Table \ref{polynomialformula} and Proposition \ref{lemma0} in
Table \ref{tabla_equivalent} we obtain the Hosoya polynomial triangle where the entry $H(r,k)$ is equal to
$F_{k}(x) F_{r-k}(x)$. This is represented in Table \ref{tabla2} without the points that contain the factor $F_{0}(x)=0$.

For Table \ref{polynomialformula} we use  $\delta:=\delta(x)=d(x)$ and $\gamma:=\gamma(x)=g(x)$, the polynomials defined in the
Hosoya polynomial sequence  are referred to as $H(r,k)$, and $p_0:=p_0(x)$ and $p_1:=p_1(x)$ are the polynomials defined in (\ref{Fibonacci;general}).

\begin{table} [!ht]
\begin{center}\scalebox{0.8}{
\begin{tabular}{|l|c|c|c|r||l|c|c|c|r|} \hline
$H(r,k)$ & $p_0$ & $p_1$ & $\delta$ & $\gamma$ & $H(r,k)$& $p_0$ & $p_1$ & $\delta$ & $\gamma$
\\ \hline \hline \noalign {\smallskip}
    $F_k(x)F_{r-k}(x)$      & 0 & $1$  & $x$ & $1$ &
    $D_k(x)D_{r-k}(x)$      & 2 & $2x$ & $2x$ & $1$\\
    $P_k(x)P_{r-k}(x)$      & 0 & $1$ & $2x$ & $1$&
    $Q_k(x)Q_{r-k}(x)$      & 2 & $2x$ & $2x$ & $1$\\
    $\Phi_k(x)\Phi_{r-k}(x)$& 0 & $1$ & $x$ & $-2$&
    $\vartheta_k(x)\vartheta_{r-k}(x)$ & $2$ & $3x$ & $x$ & $-2$\\
    $U_k(x)U_{r-k}(x)$      & 0 & $1$ & $2x$ & $-1$&
    $T_k(x)T_{r-k}(x)$      & 1 & $x$ & $2x$ & $-1$\\
    $J_k(x)J_{r-k}(x)$      & 0 & 1 &1 & $2x$&
    $j_k(x)j_{r-k}(x)$      & 2 & 1 & 1 &$2x$ \\
    $B_k(x)B_{r-k}(x)$      & 0 & $1$ & $x+2$ & $-1$&
    $C_k(x)C_{r-k}(x)$      & 2 & $x+2$ & $x+2$ & $-1$\\
    $V_k(x)V_{r-k}(x)$      & 0 & $1$ & $x$ & $-1$ &
    $v_k(x)v_{r-k}(x)$      & 2 & $x$ & $x$ & $-1$\\
     \hline
\end{tabular}}
\end{center}
\caption{Terms $H(r,k)$ of the Hosoya polynomial triangle.} \label{polynomialformula}
\end{table}

Observe that $H(r,k)$ in the first column of Table \ref{polynomialformula} is a product of polynomials of Fibonacci type. Therefore, $G_0(x)=0$ so the edges containing $G_0(x)$ as a factor in Table \ref{tabla_equivalent}, will have entries equal to zero. From the sixth column of Table \ref{polynomialformula} we see that $H(r,k)$ is a product of polynomials of Lucas type. So the edges containing $G_0(x)$ as a factor in Table \ref{tabla_equivalent} will not have entries equal to zero.

\begin{table} [!ht] \small
\begin{center} \addtolength{\tabcolsep}{-3pt} \scalebox{.9}{
\begin{tabular}{ccccccccccc}
&&&&&                                 $1$                                          &&&&&\\
&&&&                           $x$     &&       $x$                    			    &&&&\\
&&&                $x^2+1$     &&     $x^2$      &&  $x^2+1$             	         &&&\\
&&          $x^3+2x$   &&     $x^3+x$   &&     $x^3+x$  &&     $x^3+2x$               &&\\
& $x^4+3x^2+1$ &&  $x(x^3+2x)$  &&  $(x^2+1)^2$ &&   $x(x^3+2x)$  &&    $x^4+3x^2+1$    &\\
\end{tabular}}
\end{center}
\caption{The Hosoya polynomial triangle where $H(r,k) = F_{k}(x) F_{r-k}(x)$.} \label{tabla2}
\end{table}

\section{Star of David property in the Hosoya polynomial triangle}

In the first part of this section we prove one of the main results of this paper, namely the Star of David property for the
Hosoya polynomial triangle. This property holds in Pascal's triangle, Hosoya triangle, generalized Hosoya triangle, Fibonomial triangle, and gibonomial triangle.

Koshy \cite[Chapters 6 and 26]{koshytriangular} discussed how some properties of star of  David are present in
several triangular arrays. Those properties --called \emph{Hoggatt-Hansell} identity,
\emph{Gould} property, or \emph{GCD} property-- were also proved in \cite{florezHiguitaJunesGCD, florezjunes} for Hosoya
and generalized Hosoya triangles. The results in this paper generalize several results in
\cite{florezHiguitaJunesGCD, florezjunes, hosoya, koshytriangular}
that were proved for numerical sequences.  In particular  in Theorem \ref{gcdstarofdavid} parts (1), (2) and (3)
we prove the \emph{Hoggatt-Hansell} identity  and \emph{Gould} property for polynomials.

Throughout the rest of this paper we use only diagonal coordinates (see Subsection \ref{coordinatesystem})
to refer to any point in a Hosoya polynomial triangle.

In the following part of this section we take, $a_1,a_2, a_3$ and $b_1,b_2,b_3$ as the vertices of the two triangles of
the star of David and $c$ its interior point in the generalized Hosoya polynomial triangle (see Figure \ref{starofDavidF}
parts (a) and (b)). The points  $a_1,a_2, a_3$ and $b_1,b_2,b_3$ can be seen as the alternating points of a hexagon (see Figure \ref{starofDavidF} part (d)).

If we know the location of one vertex, we can obtain the location of the remaining five vertices of the star of David.
For instance, if $(m, n)$ are the diagonal coordinates of $a_{2}$, then  the points in the star of
David in Figure \ref{starofDavidF} part (a) are

\begin{table}[!ht]
\begin{center}
\begin{tabular}{llll}
$a_{1}=G_{m+1}(x)\, G_{n-2}(x),$ \quad &$a_{2}=G_{m}(x)\, G_{n}(x)$, & \text{ and } \quad& $a_{3}=G_{m+2}(x) \, G_{n-1}(x),$ \\
$b_{1}=G_{m}(x)\, G_{n-1}(x),$         &$b_{2}=G_{m+2}(x) \, G_{n-2}(x)$, & \text{ and }  & $b_{3}=G_{m+1}(x) \, G_{n}(x).$
\end{tabular}
\end{center}
\caption{Coordinates for star of David in Figure \ref{starofDavidF} part (a).} \label{coordinates:star:david(a)}
\end{table}
Similarly, if $(m, n)$ are the diagonal coordinates of $b_{2}$, then  the points in the star of
David seen in Figure \ref{starofDavidF} part (b):

\begin{table}[!ht]
\begin{center}
\begin{tabular}{llll}
$a_{1}=G_{m}(x)\, G_{n-1}(x),$         &$a_{2}=G_{m-2}(x) \, G_{n-2}(x)$, & \text{ and }  & $a_{3}=G_{m-1}(x) \, G_{n}(x),$\\
$b_{1}=G_{m-1}(x)\, G_{n-2}(x),$ \quad &$b_{2}=G_{m}(x)\, G_{n}(x)$, & \text{ and } \quad& $b_{3}=G_{m-2}(x) \, G_{n-1}(x).$
\end{tabular}
\end{center}
\caption{Coordinates for star of David in Figure \ref{starofDavidF} part (b).} \label{coordinates:star:david(b)}
\end{table}
Note that the coordinates for the point $c$ in  the star of David in Figure \ref{starofDavidF} part (a)
are given by $G_{m+1}(x)\, G_{n-1}(x)$ and coordinates for the point $c$ in  the star of David in
Figure \ref{starofDavidF} part (b) are given by $G_{m-1}(x)\, G_{n-1}(x)$.

\begin{figure} [!ht]
\begin{center}
\includegraphics[width=33mm]{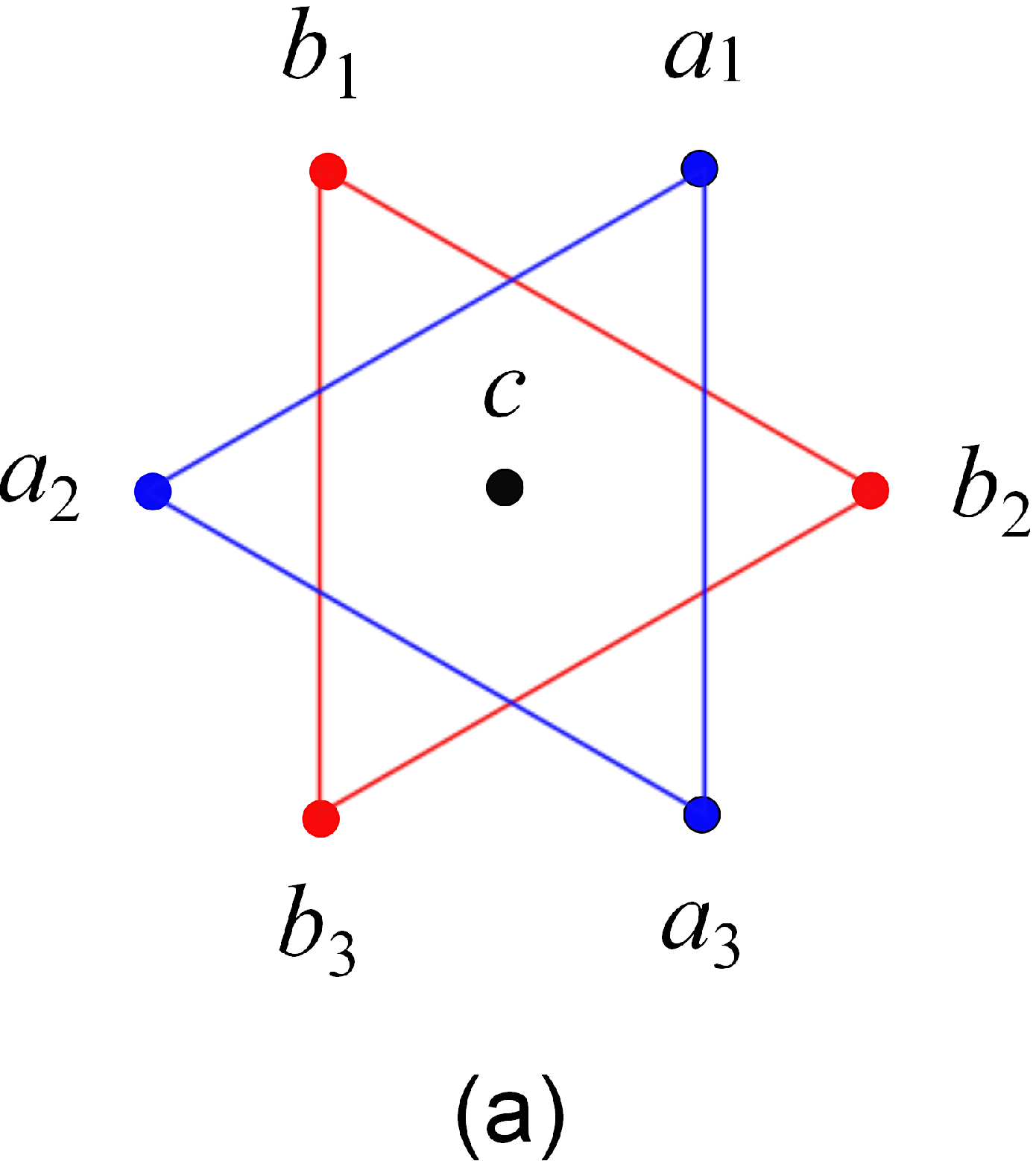} \hspace{.4cm}
\includegraphics[width=33mm]{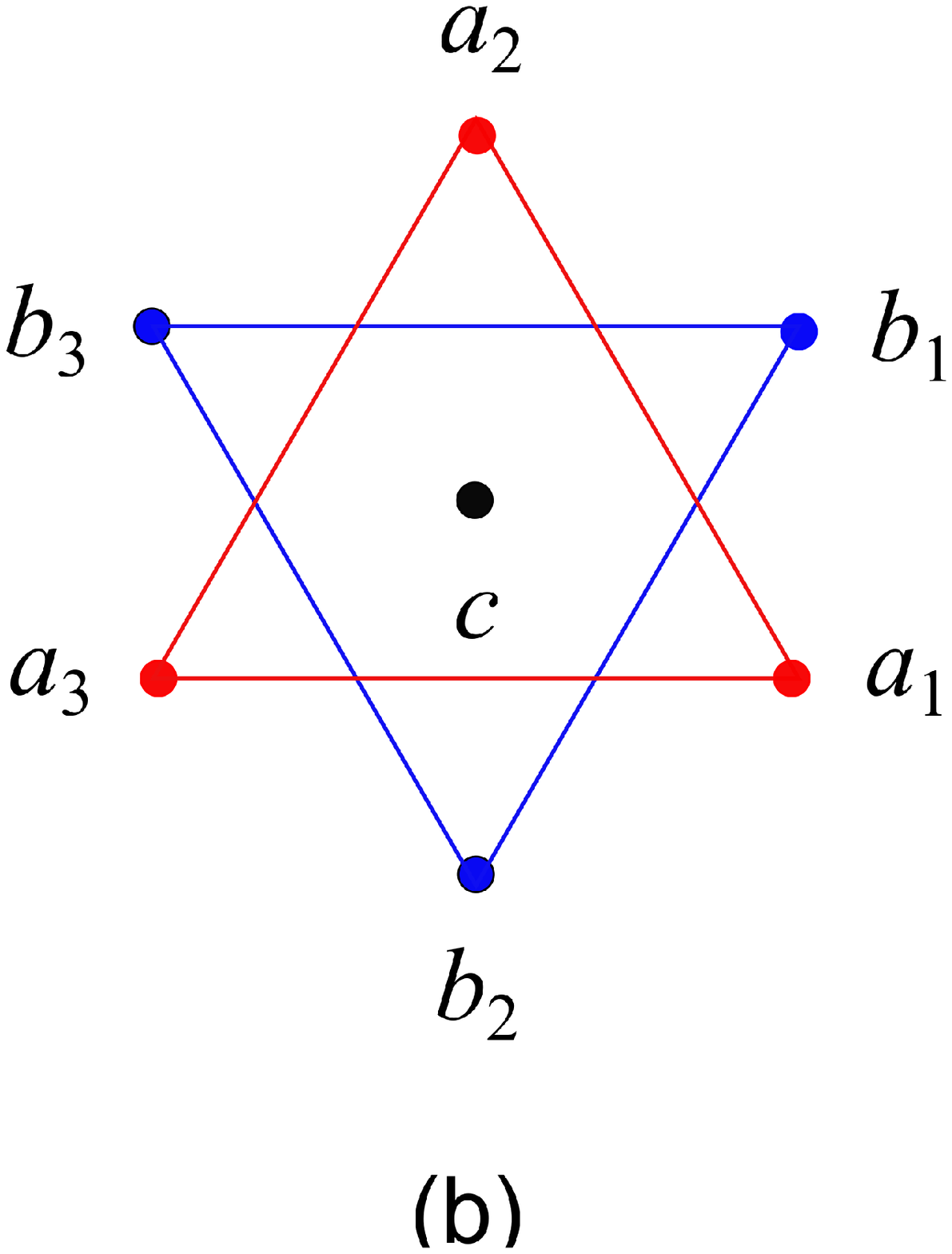} \hspace{.4cm}
\includegraphics[width=33mm]{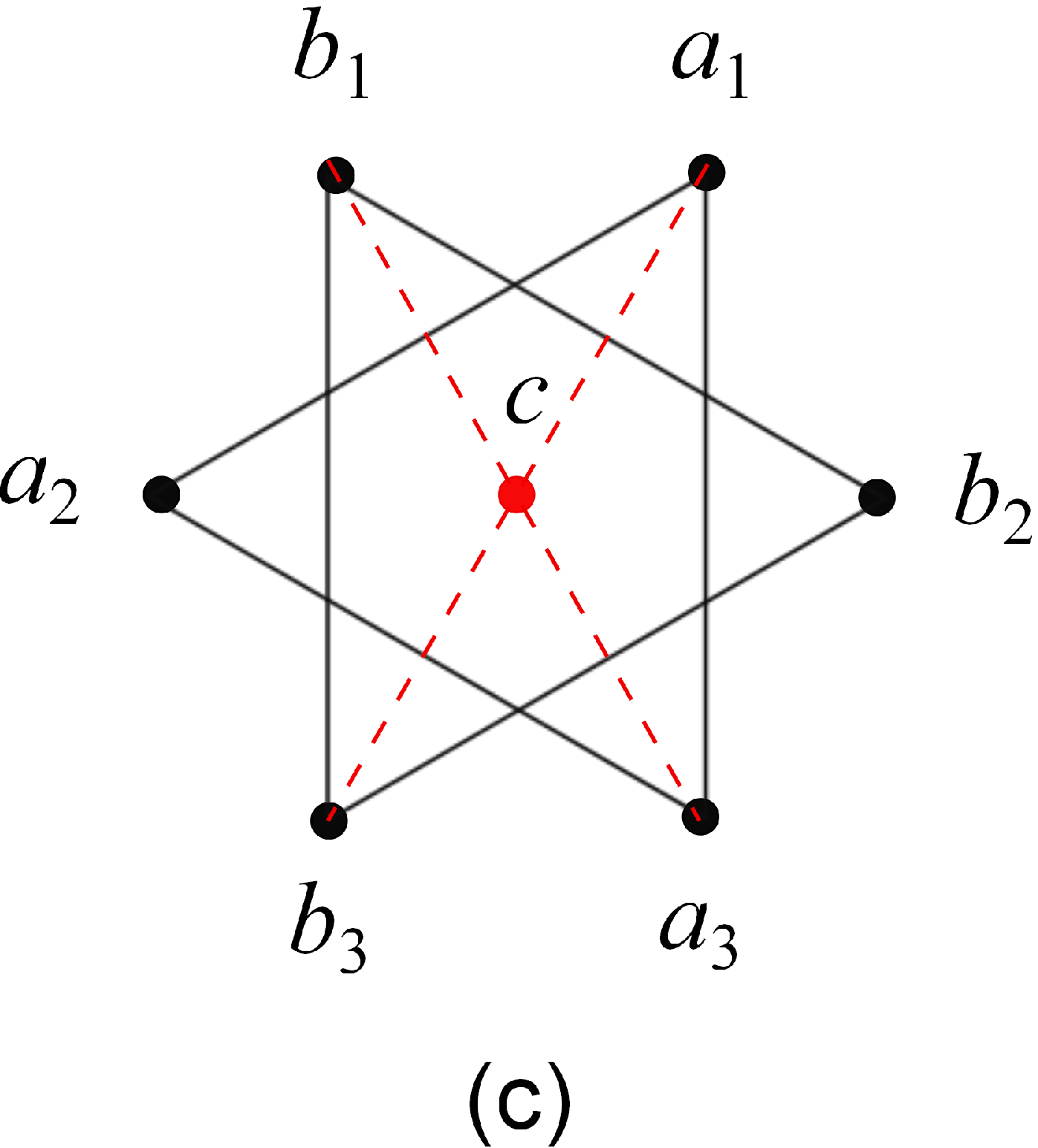} \hspace{.4cm}
\includegraphics[width=33mm]{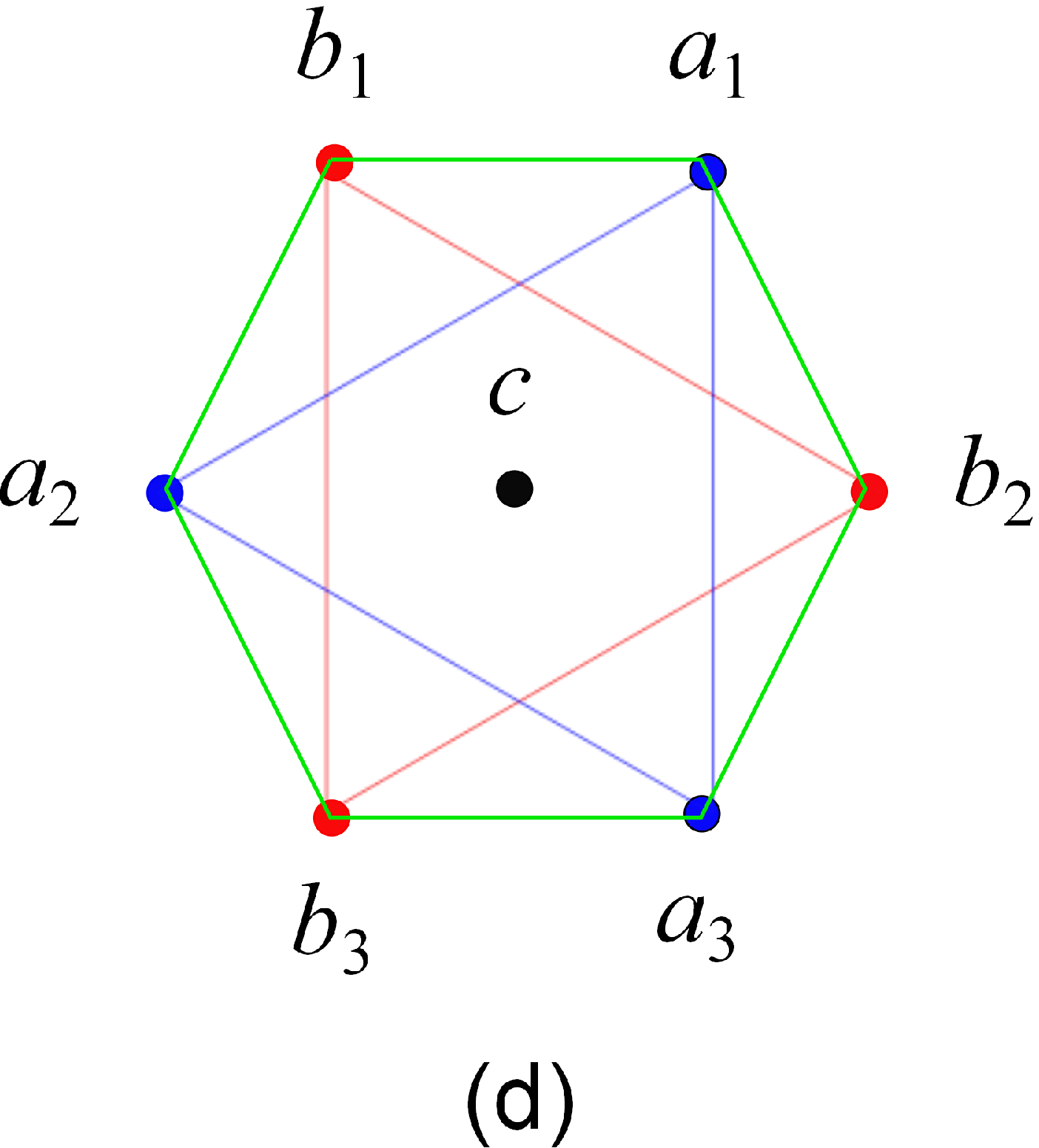}
\end{center}
\caption{ Star of David. } \label{starofDavidF}
\end{figure}

In Theorem \ref{gcdstarofdavid} part (2), we analyze whether $\gcd(a_1,a_2,a_3)=\gcd(b_1,b_2,b_3)$,
this is true if $\gcd(\rho, G_n(x)/\rho )=1$, where $\rho=\gcd(d(x),G_{1}(x))$.  The polynomials in Table \ref{equivalent}
that satisfy this condition are:
Fibonacci, Lucas, Pell-Lucas, Chebyshev first kind, Jacobsthal, Jacobsthal-Lucas, and both Morgan-Voyce polynomials.
The polynomials in Table \ref{equivalent} that satisfy that $\gcd(\rho^2, G_n(x))\not=1$ are:
Pell, Fermat, Fermat-Lucas, and Chebyshev second kind. We analyze these cases in Corollaries \ref{special:case:2} and \ref{special:case:3}.
For Theorem \ref{gcdstarofdavid} we use the points as given in Figure \ref{starofDavidF}
parts (a) and (b) with coordinates given in Tables \ref{coordinates:star:david(a)} and \ref{coordinates:star:david(b)}.

For simplicity we introduce the following notation that we use in the theorem and the corollaries below. We denote \label{delta:delta}
by $\Delta_a$ the set of vertices $\{ a_1,a_2, a_3\}$ and by $\Delta_b$ the set of vertices $\{b_1,b_2,b_3\}$ of
the two triangles of the star of David that are seen in Figure \ref{starofDavidF}. That is, the stars of David in the generalized Hosoya polynomial triangle.
For the rest of the paper we suppose that $a_2\not =  G_{0}(x) \, G_{0}(x)$.

\begin{theorem}\label{gcdstarofdavid} Suppose that $\Delta_a$ and $\Delta_b$ are as defined on page \pageref{delta:delta}. Let $c$ be the interior point of the star of David in the generalized Hosoya polynomial triangle. If $a_2\not =  G_{0}(x) \, G_{0}(x)$,  then
\begin{enumerate}[(1)]
\item  $a_1a_2a_3 = b_1b_2b_3$.
\item If  $m\ge 1$ and $n>1$, then
 \[
 \gcd(a_1,a_2,a_3)=
 \begin{cases}
         \beta \gcd(b_1,b_2,b_3), & \mbox{if $m$ and $n$ are both even;} \\
         \gcd(b_1,b_2,b_3), & \mbox{otherwise,}
\end{cases}
 \]
where $\beta$ is a constant that depends on $d(x), m$, and $n$.

\item If  $m\ge 0$ and $n\ge 0$, then
\[
 \gcd(a_1,a_2,a_3)=
 \begin{cases}
         \beta^{\prime} \gcd(b_1,b_2,b_3), & \mbox{if $m$ and $n$ are both odd;} \\
         \gcd(b_1,b_2,b_3), & \mbox{otherwise,}
\end{cases}
 \]
where $\beta^{\prime}$ is a constant that depends on $G_{1}(x), m$, and $n$.
\item The product $\gcd(a_1,b_3)\gcd(b_1,a_3)$ is equal to either $c$, $c G_{t}(x)$,  or  $c G_{t}^2(x)$, where $t=1$ if $G_{t}$ is Lucas type and  $t=2$ if $G_{t}$ is Fibonacci type.
\end{enumerate}
\end{theorem}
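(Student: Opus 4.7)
The plan is to treat the four parts separately, since they have rather different characters. Part (1) is an immediate algebraic identity: reading off the coordinates from Table \ref{coordinates:star:david(a)},
\[
a_1 a_2 a_3 \;=\; \bigl(G_{m+1}G_{n-2}\bigr)\bigl(G_m G_n\bigr)\bigl(G_{m+2}G_{n-1}\bigr) \;=\; G_m G_{m+1}G_{m+2}\cdot G_{n-2}G_{n-1}G_n,
\]
and the same product is obtained from $b_1b_2b_3$ after reordering the factors. This is the Hoggatt--Hansell identity and requires no lemmas beyond the definitions.

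For Parts (2) and (3), the plan is to compute $\gcd(a_1,a_2,a_3)$ and $\gcd(b_1,b_2,b_3)$ as products of pairwise gcds of individual GFP. Starting with $\gcd(a_1,a_3)$, the pairs $(G_{m+1},G_{m+2})$ and $(G_{n-2},G_{n-1})$ are coprime by Lemma \ref{gcddistance1;2}(4)--(5) (distance one), so Lemma \ref{prop2;1}(2) yields
\[
\gcd(a_1,a_3)=\gcd(G_{m+1},G_{n-1})\,\gcd(G_{n-2},G_{m+2}),
\]
and a similar identity for $\gcd(b_1,b_3)$. Intersecting then with $a_2$ (resp.\ $b_2$) introduces additional pairwise gcds which I would sort into distance-$\le 2$ gcds, handled directly by Lemma \ref{gcddistance1;2}(4)--(5), and cross-diagonal gcds, for which Proposition \ref{modulo:dx} is used to reduce modulo $d^2(x)$ and identify the obstruction. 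The parity hypothesis then isolates exactly those cases in which a distance-$2$ gcd becomes nontrivial: it equals $G_1^*(x)$ in the Lucas-type setting when the indices are both odd, and equals $G_2'(x)$ in the Fibonacci-type setting when the indices are both even. These nontrivial gcds produce the multiplicative constants $\beta$ and $\beta'$; in all other parity configurations the close-distance gcds collapse to $1$ and the two sides coincide.

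For Part (4), I would use the elementary identity $\gcd(pq,pr)=p\gcd(q,r)$ in the GCD domain $\mathbb{Z}[x]$. Since $a_1$ and $b_3$ share the factor $G_{m+1}(x)$ while $b_1$ and $a_3$ share the factor $G_{n-1}(x)$,
\[
\gcd(a_1,b_3)\gcd(b_1,a_3)=G_{m+1}(x)G_{n-1}(x)\,\gcd(G_{n-2},G_n)\,\gcd(G_m,G_{m+2})=c\cdot\gcd(G_{n-2},G_n)\,\gcd(G_m,G_{m+2}).
\]
By Lemma \ref{gcddistance1;2}(4)--(5), each of the two distance-$2$ gcds equals $1$, $G_1^*(x)$ (Lucas type, both indices odd), or $G_2'(x)$ (Fibonacci type, both even). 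The product therefore lies in $\{c,\,cG_t(x),\,cG_t^2(x)\}$ with $t$ as stated.

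The main obstacle will be the careful case analysis in Parts (2) and (3), especially pinning down the precise form of $\beta$ and $\beta'$. The fact that $\beta$ depends on $d(x)$ and $\beta'$ on $G_1(x)$ is a strong hint: $\beta$ should emerge from the modular reduction of Proposition \ref{modulo:dx} applied in the Lucas-type ``both even'' case, while $\beta'$ should emerge from Lemma \ref{gcddistance1;2}(1), which produces the factor $G_1(x)=\gcd(d(x),G_{2n+1}(x))$ for odd indices. Once the correct inflation factors are identified, the remaining steps are essentially bookkeeping with the two GCD lemmas of Section 3.
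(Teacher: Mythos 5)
Your plan follows the paper's proof essentially step for step: part (1) by direct multiplication of the diagonal coordinates, parts (2)--(3) by splitting the triple gcds into pairwise gcds via Lemma \ref{prop2;1}(2), disposing of the distance-one and distance-two gcds with Lemma \ref{gcddistance1;2}, and extracting the inflation factors from the reduction of Proposition \ref{modulo:dx}, and part (4) exactly as in the paper. One correction to your closing paragraph, which contradicts your (correct) earlier statement: the both-even case producing $\beta$ is the \emph{Fibonacci-type} case (for Lucas type with both indices even the gcd is simply $1$), the both-odd case producing $\beta'$ is the Lucas-type case, and $\beta'$ likewise comes from Proposition \ref{modulo:dx} applied modulo $(G_1^*(x))^2$ rather than from Lemma \ref{gcddistance1;2}(1).
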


\begin{proof}
From the diagonal coordinates --of the star of David given in Figure \ref{starofDavidF}-- given
for $a_1,a_2, a_3$ and $b_1, b_2, b_3$ it is easy to see that part (1) is true.
We now observe that the star of David can be constructed in the Hosoya polynomial triangle if $m\ge 0$, $n\ge 2$.

We prove parts (2) and (3) together for the case in which the star of David is as in Figure \ref{starofDavidF} part (a).
The proof of the case of the star of David in Figure \ref{starofDavidF} part (b) is similar and we omit it.

From Lemma \ref{prop2;1} part (2) we have
\[\gcd(G_{m}(x)G_{n-1}(x),G_{m+1}(x)G_{n}(x))=\gcd(G_{m}(x),G_{n}(x))\gcd(G_{n-1}(x)G_{m+1}(x)).\]
Therefore,
\begin{eqnarray*}
	\gcd\left(b_1,b_3,b_2\right)&=&\gcd\left(\gcd\left(G_{m}(x)G_{n-1}(x),G_{m+1}(x)G_{n}(x)\right),G_{m+2}(x)G_{n-2}(x)\right)\\
							    &=&\gcd\left(\gcd(G_{m}(x),G_{n}(x))\gcd\left(G_{n-1}(x)G_{m+1}(x)\right) ,G_{m+2}(x)G_{n-2}(x)\right).
\end{eqnarray*}	
From Lemma \ref{gcddistance1;2} we know that
\[\gcd(G_{m+2}(x)G_{n-2}(x),\gcd(G_{n-1}(x),G_{m+1}(x)))=1.\]
So, 	
\begin{eqnarray}\label{triangle:bs}
	\gcd\left(b_1,b_3,b_2\right)&=&\gcd\left(\gcd\left(G_{m}(x),G_{n}(x)\right),G_{m+2}(x),G_{n-2}(x)\right)\nonumber\\
							    &=&\gcd\left(G_{m}(x),G_{n}(x),G_{m+2}(x)G_{n-2}(x)\right)\nonumber\\
							    &=&\gcd(G_{m}(x),\gcd(G_{n}(x),G_{m+2}(x)G_{n-2}(x))).
\end{eqnarray}
Let $D_t(x)=\gcd(G_{t}(x),G_{t-2}(x))$ for $t >0$. This, \eqref{triangle:bs}, and Lemma \ref{prop2;1} imply that
\begin{eqnarray}\label{trianglebb:cases}
	\gcd(b_1,b_3,b_2)
	&=&\gcd(G_{m}(x),\gcd(G_{n}(x),G_{m+2}(x)D_n(x)))\nonumber\\
	&=&\gcd(G_{n}(x),\gcd(G_{m}(x),G_{m+2}(x)D_n(x)))\nonumber\\
	&=&\gcd(G_{n}(x),\gcd(G_{m}(x),D_m(x)D_n(x)))\nonumber\\
	&=&\gcd(G_{n}(x),G_{m}(x),D_m(x)D_n(x)).
\end{eqnarray}
Similarly, we can see that
\[\gcd(a_1,a_2,a_3)=\gcd(G_{n-2}(x),G_{m+2}(x),D_n(x)D_m(x)).\]

We prove the remaining part of this proof by cases (GFP of Fibonacci type and Lucas type).

{\bf Case GFP of Fibonacci type}. Let's suppose that $G_n(x)=G^{\prime}_n(x)$ and we divide this case into three sub-cases depending on the parity of $m$ and $n$.

{\bf Sub-case $m$ and $n$ are odd}. From Lemma \ref{gcddistance1;2} part (5)  it easy to see that
$$\gcd(b_1,b_3,b_2)=\gcd(G^{\prime}_{n}(x),G^{\prime}_{m}(x),D_m(x)D_n(x))=1$$
and
$$\gcd(a_1,a_2,a_3)=\gcd(G^{\prime}_{n-2}(x),G^{\prime}_{m+2}(x),D_n(x)D_m(x))=1.$$

{\bf Sub-case $m$ and $n$ have different parity}. From Lemma \ref{gcddistance1;2} part (5) it is easy to see that $D_n(x)D_m(x)=G_{2}(x)$.
This and \eqref{trianglebb:cases} imply that $\gcd(b_1,b_3,b_2)=\gcd(G_{n}(x),G_{m}(x),G_{2}(x))$.
This and Lemma \ref{gcddistance1;2} part (1), imply that $\gcd(b_1,b_3,b_2)=1$.

{\bf Sub-case both $m$ and $n$ are even}. Suppose that $n=2k_1$ and $m=2k_2$. So, from Lemma \ref{gcddistance1;2} part (5) we have that
$D_m(x)=D_n(x)=d(x)$. Since  $G^{'}_0(x)=0$ and $G^{'}_1(x)=1$, by Proposition \ref{modulo:dx} we have
\begin{eqnarray*}
	G^{'}_{2k_1}(x)&\equiv&k_1g^{k_1-1}(x)d(x)\bmod{d^2(x)}\\
	G^{'}_{2k_2}(x)&\equiv&k_2g^{k_2-1}(x)d(x)\bmod{d^2(x)}.
\end{eqnarray*}
This and $\gcd(d(x),g(x))=1$ imply that
\[\gcd(b_1,b_2,b_3)=\gcd(k_1g^{k_1-1}(x)d(x),k_2g^{k_2-1}(x)d(x),d^2(x))=d(x)\gcd(d(x),k_1,k_2).\]
Similarly we have that
$\gcd(a_1,a_2,a_3)=d(x)\gcd(d(x),k_1-1,k_2+1).$

Let $\beta=\left(\gcd(d(x),k_1-1,k_2+1)\right)/\left(\gcd(d(x),k_1,k_2)\right)$. Therefore,
\[\gcd(a_1,a_2,a_3)=\beta\gcd(b_1,b_2,b_3).\]
Notice that if the star of David is as in Figure \ref{starofDavidF} part (b), then
\[\beta=\left(\gcd(d(x),k_1,k_2)\right)/\left(\gcd(d(x),k_1-1,k_2-1)\right).\]
This completes the proof of part (2).

We prove part (3) for the star of David in Figure \ref{starofDavidF} part (a).
The proof for the star of David in Figure \ref{starofDavidF} part (b) is similar and we omit it.

{\bf Case GFP of Lucas type.} Let's suppose that $G_n(x)=G^{*}_n(x)$. If $m$ and $n$ are not both even, then the proof follows in a similar way as seen above.

{\bf Sub-case both $m$ and $n$ are odd.} Suppose that  $n=2k_1+1$ and  $m=2k_2+1$. Therefore,  by Lemma \ref{gcddistance1;2} part (4) we know that $D_m(x)=D_n(x)=G^{*}_1(x)$. Since, $G^{*}_1(x)|d(x)$, by Proposition \ref{modulo:dx}
we have that
\begin{eqnarray*}
	G^{*}_{n}(x)&\equiv&n g^{k_1}(x)G^{*}_1(x) \bmod {(G^{*}_1(x))^2}\\
	G^{*}_{m}(x)&\equiv&m g^{k_2}(x)G^{*}_1(x) \bmod {(G^{*}_1(x))^2}.
\end{eqnarray*}
From this and \eqref{trianglebb:cases} is easy to see that
\[\gcd(b_1,b_2,b_3)=\gcd(n g^{k_1}(x)G^{*}_1(x),m g^{k_2}(x)G^{*}_1(x), {(G^{*}_1(x))^2}).\]
This and $\gcd(d(x),g(x))=1$ imply that
$ \gcd(b_1,b_2,b_3)=G^{*}_1(x)\gcd(n, m, G^{*}_1(x)).$

Similarly we can prove that
\[\gcd(a_1,a_2,a_3)=G^{*}_1(x)\gcd(G^{*}_1(x),n-2,m+2).\]
Let $\beta^{\prime}=\left(\gcd(G^{*}_1(x),n-2,m+2)\right)/\left(\gcd(G^{*}_1(x),n,m)\right)$. Then,
\[\gcd(a_1,a_2,a_3)= \beta^{\prime}\gcd(b_1,b_2,b_3).\]
Notice that if the star of David is as in Figure \ref{starofDavidF} part (b), then
\[\beta^{\prime}=\left(\gcd(G^{*}_1(x),n,m)\right)/\left(\gcd(G^{*}_1(x),n-2,m-2)\right).\]
This completes the proof of part (3).

We prove part (4) for the star of David in Figure \ref{starofDavidF} part (a).
The proof for the star of David in Figure \ref{starofDavidF} part (b) is similar and we omit it.
Using the diagonal coordinates we have that $\gcd(a_1,b_3)\gcd(b_1,a_3)$ is equal to
$$
\gcd(G_{m+1}(x) G_{n-2}(x),G_{m+1}(x)G_{n}(x))\gcd(G_{m}(x) G_{n-1}(x),G_{m+2}(x)G_{n-1}(x)).
$$
Therefore,
\[
\gcd(a_1,b_3)\gcd(b_1,a_3)= G_{m+1}(x) G_{n-1}(x) \gcd( G_{n-2}(x),G_{n}(x))\gcd(G_{m}(x) ,G_{m+2}(x)).
\]
 The conclusions follow using Proposition \ref{gcddistance1;2}.
\end{proof}

From the proof of Theorem \ref{gcdstarofdavid} it is easy to see the following corollaries.

\begin{corollary} \label{special:case:1} Suppose that $\Delta_a$ and $\Delta_b$ are as defined on page \pageref{delta:delta}, then
	 $\gcd(a_1,a_2,a_3)=\gcd(b_1,b_2,b_3)$, if $G_{t}(x)$ is one of the following polynomials: Fibonacci, Lucas,
	 Jacobsthal, Jacobsthal-Lucas,  Chebyshev first kind polynomials, Pell-Lucas, and both Morgan-Voyce polynomials.
\end{corollary}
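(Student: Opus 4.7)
The corollary follows from a direct case analysis of Theorem \ref{gcdstarofdavid} parts (2) and (3). Those parts already establish $\gcd(a_1,a_2,a_3)=\gcd(b_1,b_2,b_3)$ in every parity configuration except two: (i) $G_t(x)$ of Fibonacci type with $m$ and $n$ both even, where the ratio is
\[
\beta=\frac{\gcd(d(x),k_1-1,k_2+1)}{\gcd(d(x),k_1,k_2)},
\]
and (ii) $G_t(x)$ of Lucas type with $m$ and $n$ both odd, where the ratio is
\[
\beta'=\frac{\gcd(G_1^{*}(x),n-2,m+2)}{\gcd(G_1^{*}(x),n,m)}.
\]
It therefore suffices to show $\beta=1$ in case (i) and $\beta'=1$ in case (ii) for each polynomial named in the corollary.

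The one tool needed is the following observation in $\mathbb{Z}[x]$: if $P(x)$ is primitive (content $1$) and $c$ is a nonzero integer, then $\gcd(P(x),c)=1$, since any common divisor must be a constant dividing the content of $P$. Hence $\gcd(P(x),c_1,c_2)=1$ whenever $P$ is primitive and $(c_1,c_2)\ne(0,0)$. For the Fibonacci-type polynomials in the list, the value $d(x)$ is $x$, $1$, or $x+2$ for $F_n$, $J_n$, and $B_n$ respectively, all primitive. The hypotheses $m\geq 1$ and $n>1$ in Theorem \ref{gcdstarofdavid}(2), together with $m,n$ even, force $k_1,k_2\geq 1$, so both $(k_1,k_2)$ and $(k_1-1,k_2+1)$ are nonzero tuples, and both numerator and denominator of $\beta$ equal $1$.

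For the Lucas-type polynomials in the list, the values $G_1^{*}(x)$ are $2x$ (Lucas $D_n$), $x$ (Pell-Lucas $Q_n'$ and Chebyshev $T_n$), $1$ (Jacobsthal-Lucas $j_n$), and $x+2$ (Morgan-Voyce $C_n$). In every case other than Lucas, $G_1^{*}(x)$ is primitive, and the same argument gives $\beta'=1$. For Lucas, $G_1^{*}=2x$ has content $2$, so primitivity fails; however, the parity hypothesis that $m$ and $n$ are both odd makes $n-2$, $n$, $m$, and $m+2$ all odd, and $\gcd(2,\cdot)=1$ for each, so the gcd in $\mathbb{Z}[x]$ of $2x$ with any of these integers reduces to $1$. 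Hence $\beta'=1$ here as well.

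The main subtlety is this last point: when $d(x)$ or $G_1^{*}(x)$ is not primitive, a non-trivial content factor can survive into the gcd, which is exactly what occurs for Pell, Fermat, Fermat-Lucas, and Chebyshev second kind (analyzed separately in Corollaries \ref{special:case:2} and \ref{special:case:3}). The polynomials in the present corollary are characterized by the fact that either the relevant polynomial is primitive in $\mathbb{Z}[x]$, or---as for Lucas---the parity hypothesis of the exceptional branch itself forces the content to contribute trivially to both numerator and denominator of $\beta'$.
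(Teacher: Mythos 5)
Your proposal is correct and follows the same route the paper intends: the paper offers no written proof beyond ``from the proof of Theorem \ref{gcdstarofdavid} it is easy to see,'' and what you supply is exactly the missing verification that $\beta=\beta'=1$ for the listed polynomials, via the observation that a primitive $d(x)$ or $G_1^{*}(x)$ has trivial $\gcd$ with any nonzero integer (plus the parity argument disposing of the content $2$ in the Lucas case), which is precisely the mechanism behind the paper's dichotomy between these polynomials and the Pell/Fermat/Chebyshev-second-kind cases of Corollaries \ref{special:case:2} and \ref{special:case:3}. The only cosmetic omission is that you quote only the configuration-(a) formulas for $\beta$ and $\beta'$; the configuration-(b) ratios are handled identically, with the hypothesis $a_2\ne G_0(x)\,G_0(x)$ guaranteeing the integer tuples there are likewise nonzero.
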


\begin{corollary}\label{special:case:2} Suppose that $\Delta_a$ and $\Delta_b$ are as defined on page \pageref{delta:delta} and that
	 $\{G_{t}^{\prime}(x)\}$ is a GFP of Fibonacci type with $n=2k_1$ and $m=2k_2$.
	\begin{enumerate}
		\item If $G_{n}^{\prime}(x)$ is a Pell polynomial or Chebyshev polynomial of the second kind with  $k_1k_2\not\equiv 0 \bmod 4 $ and $k_1\not\equiv k_2 \bmod 2 $, then
				 $\gcd(a_1,a_2,a_3)=\gcd(b_1,b_2,b_3).$
				
			\item If $G_{n}^{\prime}(x)$ is a Fermat polynomial with $k_1k_2\not\equiv 0 \bmod 9 $ and $k_1\not\equiv 2 k_2 \bmod 3 $, then
		$\gcd(a_1,a_2,a_3)=\gcd(b_1,b_2,b_3).$	
		
		\item Suppose that $\Delta_a$ and $\Delta_b$ are as in Figure \ref{starofDavidF} part (a) and that $G_{n}^{\prime}(x)$ is a Fermat polynomial.
		\[ \text{If } k_1k_2\not\equiv 0 \bmod 9 \text{ and } k_1\not\equiv 2 k_2 \bmod 3, \text{ then } \gcd(a_1,a_2,a_3)=\gcd(b_1,b_2,b_3).\]	
				
		\item Suppose that $\Delta_a$ and $\Delta_b$ are as in Figure \ref{starofDavidF} part (b) and that $G_{n}^{\prime}(x)$ is a Fermat polynomial.
			\[ \text{If } k_1k_2\not\equiv 0 \bmod 9 \text{ and } (k_1-1)(k_2-1)\not\equiv 0 \bmod 9 , \text{ then } \gcd(a_1,a_2,a_3)=\gcd(b_1,b_2,b_3).\]		
				
	\end{enumerate}
\end{corollary}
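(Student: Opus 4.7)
The plan is to apply Theorem~\ref{gcdstarofdavid}(2) in the regime where both $n = 2k_1$ and $m = 2k_2$ are even --- the only sub-case in which the multiplier $\beta$ from that theorem can differ from $1$ for Fibonacci type polynomials. From the proof of the theorem one has
\[
\beta = \frac{\gcd(d(x),\,k_1-1,\,k_2+1)}{\gcd(d(x),\,k_1,\,k_2)} \quad\text{or}\quad \beta = \frac{\gcd(d(x),\,k_1,\,k_2)}{\gcd(d(x),\,k_1-1,\,k_2-1)},
\]
according to whether the star of David is arranged as in Figure~\ref{starofDavidF}(a) or (b). The claim $\gcd(a_1,a_2,a_3) = \gcd(b_1,b_2,b_3)$ is therefore equivalent to $\beta = 1$, and the entire task is to verify this under each set of hypotheses given.

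The key simplification is that for every polynomial family listed, $d(x) = cx$ with $c$ a prime integer: $c = 2$ for Pell $P_n(x)$ and Chebyshev of the second kind $U_n(x)$, and $c = 3$ for Fermat $\Phi_n(x)$ (read off from the Binet data in Table~\ref{equivalent} via $d(x) = a(x) + b(x)$). Because any nonzero integer $k$ shares no non-constant factor with $cx$, we have $\gcd(cx, k) = \gcd(c, k)$ in $\mathbb{Z}[x]$, so $\gcd(d(x), k_1, k_2) = \gcd(c, k_1, k_2)$, an ordinary integer. With $c$ prime this gcd equals $c$ exactly when every entry is divisible by $c$, and equals $1$ otherwise; hence $\beta = 1$ iff neither of the two gcds attains the value $c$, which is a small case analysis modulo $c$.

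For part~(1), $c = 2$: under $k_1 \not\equiv k_2 \pmod 2$ one of $k_1, k_2$ is odd, so $\gcd(2, k_1, k_2) = 1$; the pairs $(k_1-1, k_2+1)$ and $(k_1-1, k_2-1)$ then both inherit opposite parities, yielding $\beta = 1$ in both figures (the side hypothesis $4\nmid k_1 k_2$ is automatic once the parities differ). For the Fermat cases $c = 3$: the hypothesis $9\nmid k_1 k_2$ eliminates the possibility that $3\mid k_1$ and $3\mid k_2$ simultaneously, giving $\gcd(3, k_1, k_2) = 1$. For Figure~(a) (parts~(2) and (3)), the extra assumption $k_1 \not\equiv 2k_2 \pmod 3$ excludes the single residue class $(k_1, k_2) \equiv (1, 2) \pmod 3$, which is the only obstruction to $\gcd(3, k_1-1, k_2+1) = 1$; for Figure~(b) (part~(4)), the assumption $(k_1-1)(k_2-1) \not\equiv 0 \pmod 9$ analogously rules out $(k_1, k_2) \equiv (1, 1) \pmod 3$, giving $\gcd(3, k_1-1, k_2-1) = 1$. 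In every case $\beta = 1$.

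The main obstacle is really just organizational: correctly identifying $d(x) = cx$ for each named family (Table~\ref{equivalent}'s Binet data are authoritative here, as $d = a+b$) and carefully enumerating the small residue classes modulo $c$ to check that the stated hypotheses genuinely kill every obstruction. Once that bookkeeping is done, the corollary falls out of Theorem~\ref{gcdstarofdavid} and the elementary identity $\gcd(cx, k) = \gcd(c, k)$.
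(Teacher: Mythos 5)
Your approach is exactly what the paper intends: the paper gives no written proof of this corollary beyond ``from the proof of Theorem~\ref{gcdstarofdavid} it is easy to see,'' and you correctly reduce everything to checking $\beta=1$ using the explicit formulas $\beta=\gcd(d(x),k_1-1,k_2+1)/\gcd(d(x),k_1,k_2)$ (Figure~\ref{starofDavidF}(a)) and $\beta=\gcd(d(x),k_1,k_2)/\gcd(d(x),k_1-1,k_2-1)$ (Figure~\ref{starofDavidF}(b)), together with $\gcd(cx,k)=\gcd(c,k)$ for $d(x)=cx$ with $c=2$ or $c=3$ prime. Two caveats. First, your parenthetical that ``$4\nmid k_1k_2$ is automatic once the parities differ'' is false ($k_1=4$, $k_2=1$), though harmless, since your argument only uses the parity hypothesis. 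Second, your treatment of part~(2) silently restricts it to the configuration of Figure~\ref{starofDavidF}(a): as literally stated, part~(2) covers both configurations, and for configuration~(b) its hypotheses do not force $\beta=1$ (take $k_1\equiv k_2\equiv 1\pmod 3$, e.g.\ $k_1=k_2=4$: then $9\nmid k_1k_2$ and $k_1\not\equiv 2k_2\pmod 3$, yet $\gcd(3,k_1-1,k_2-1)=3$ while $\gcd(3,k_1,k_2)=1$). This appears to be a defect in the corollary's statement itself --- parts~(3) and~(4) are evidently the configuration-specific corrections --- but you should say explicitly that part~(2) is being read as the Figure~\ref{starofDavidF}(a) case rather than leaving the gap unacknowledged.
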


 \begin{corollary}\label{special:case:3}  Suppose that $\Delta_a$ and $\Delta_b$ are as defined on page \pageref{delta:delta}. Let  $\{G_{t}^{\prime}(x)\}$
 	be the sequence of  Fermat-Lucas polynomials.
 				\begin{enumerate}
				 \item   Suppose that $\Delta_a$ and $\Delta_b$ are as in Figure \ref{starofDavidF} part (a).
				  \[ \text{If } nm\not\equiv 0 \bmod 9 \text{ and } (n-2)(m+2)\not\equiv 0 \bmod 9, \text{ then }
 				\gcd(a_1,a_2,a_3)=\gcd(b_1,b_2,b_3).\]
		
		 		\item   Suppose that $\Delta_a$ and $\Delta_b$ are as in Figure \ref{starofDavidF} part (b).
		 		 \[ \text{If } nm\not\equiv 0 \bmod 9 \text{ and } (n-2)(m-2)\not\equiv 0 \bmod 9 ,\text{ then }
 				\gcd(a_1,a_2,a_3)=\gcd(b_1,b_2,b_3).\]
		\end{enumerate}

 \end{corollary}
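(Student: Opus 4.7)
The plan is to deduce Corollary \ref{special:case:3} as an immediate specialization of Theorem \ref{gcdstarofdavid} part (3) to the Fermat-Lucas family. Since $\vartheta_n(x)$ is of Lucas type, the theorem applies: if $m$ and $n$ are not both odd, then $\gcd(a_1,a_2,a_3)=\gcd(b_1,b_2,b_3)$ unconditionally, and the hypotheses of the corollary do no work. So it suffices to settle the case in which both $m$ and $n$ are odd.

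In that case, the proof of Theorem \ref{gcdstarofdavid} part (3) gives
\[
\gcd(a_1,a_2,a_3)=\beta^{\prime}\gcd(b_1,b_2,b_3),\qquad \beta^{\prime}=\frac{\gcd(G_1^{*}(x),n-2,m+2)}{\gcd(G_1^{*}(x),n,m)}
\]
for Figure \ref{starofDavidF}(a), and the analogous quotient with $(n-2,m-2)$ in place of $(n-2,m+2)$ for Figure \ref{starofDavidF}(b). From Table \ref{polynomialformula} the Fermat-Lucas polynomial satisfies $G_1^{*}(x)=p_1(x)=3x$. Because a common divisor of $3x$ and an integer $k$ in $\mathbb{Z}[x]$ must be a constant dividing the content of $3x$, namely $3$, one has $\gcd(3x,k)=\gcd(3,k)$, and therefore each of the four gcds appearing in $\beta^{\prime}$ lies in $\{1,3\}$.

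The remaining step is to translate the modular hypotheses of the corollary into $\beta^{\prime}=1$. Note that $\gcd(3,s,t)=3$ iff $3\mid s$ and $3\mid t$, which forces $9\mid st$; contrapositively, $st\not\equiv 0\pmod{9}$ forces $\gcd(3,s,t)=1$. Applying this twice with $(s,t)=(n,m)$ and $(s,t)=(n-2,m+2)$, the hypotheses in case (1) drive both the numerator and denominator of $\beta^{\prime}$ to $1$, yielding the desired equality; case (2) is identical with $(n-2,m-2)$ in place of $(n-2,m+2)$.

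I do not foresee any substantive obstacle: the corollary is essentially a transcription of Theorem \ref{gcdstarofdavid} part (3) once $G_1^{*}(x)=3x$ is inserted. The only mild bookkeeping is the reduction $\gcd(3x,k)=\gcd(3,k)$ in $\mathbb{Z}[x]$. It is also worth remarking that the stated hypothesis $nm\not\equiv 0\pmod{9}$ is strictly stronger than the tight condition $\gcd(3,n,m)=1$ actually required for the conclusion; the modular form is used for convenience of statement.
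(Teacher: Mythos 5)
Your proposal is correct and follows exactly the route the paper intends: the paper gives no separate argument for this corollary beyond the remark that it follows from the proof of Theorem \ref{gcdstarofdavid}, and your specialization of part (3) with $G_1^{*}(x)=3x$, the reduction $\gcd(3x,k)=\gcd(3,k)$, and the translation of $st\not\equiv 0\pmod 9$ into $\gcd(3,s,t)=1$ is precisely the omitted bookkeeping. Your closing observation that the stated modular hypothesis is strictly stronger than the condition actually needed is also accurate.
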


 \section{The star of David in the gibonomial triangle}

In this section we give a brief observation related to gibonomial coefficients.
Let $f^{*}_{k}(x)$ be the product of Fibonacci polynomials $F_{k}(x)F_{k-1}(x)\dots F_{1}(x)$.
Then the $n$th \emph{gibonomial coefficient} is defined by
$$\left[{n \brack r} \right] =\frac{f^{*}_{n}(x)}{f^{*}_{n-r}(x)f^{*}_{r}(x)}.$$
Notice that $f^{*}_{k}(1)$ gives rise to the classic (numerical) \emph{Fibonomial coefficient}
(see \cite{hoggattHillman}).  Koshy \cite{koshygibonomial} defines the  \emph{gibonomial triangle},
similarly as Pascal (binomial) triangle, where its entries are gibonomial coefficients instead of binomial
coefficients (see Table \ref{tabla3}). Note that Sagan and Savage \cite{Sagan} define \emph{lucanomials}, ${n \brace r}$,
 where $\left[{n \brack r} \right]$ is a particular case.

We now consider the star of David as in Figure \ref{starofDavidF} part (a) on page~\pageref{starofDavidF},
where the vertices are gibonomial coefficients (see Table \ref{coordinates:star:david_Gibonomial}).
Now it is easy to see that this star of David embeds in the  gibonomial triangle. Koshy \cite{koshygibonomial} proved  that $a_1a_2a_3 = b_1b_2b_3$.
In this section we establish the  second fundamental property of the star of David for the gibonomial triangle --the $GCD$ property--.
However, the property described in Figure \ref{starofDavidF} part (c) on page~\pageref{starofDavidF} does not hold in this triangle. Thus,
  $\gcd(a_1,b_3)\gcd(b_1,a_3)$ is not equal to $c=  \left[{n \brack r} \right]$.

\begin{table}[!ht]
	\begin{center}
		\begin{tabular}{llll}
			$a_{1}=\left[{n-1 \brack r} \right]$, \quad &$a_{2}=\left[{n \brack r-1} \right]$, & \text{ and } \quad& $a_{3}=\left[{n+1 \brack r+1} \right]$, \\ \\
			$b_{1}=\left[{n-1 \brack r-1} \right]$,         &$b_{2}=\left[{n \brack r+1} \right]$, & \text{ and }  & $b_{3}=\left[{n+1 \brack r} \right]$.
		\end{tabular}
	\end{center}
	\caption{Coordinates for the star of David in Figure \ref{starofDavidF} part (a) on page~\pageref{starofDavidF}.} \label{coordinates:star:david_Gibonomial}
\end{table}

\begin{table} [!ht] \footnotesize
  \begin{center} \addtolength{\tabcolsep}{-3pt} \scalebox{.9}{
	\begin{tabular}{ccccccccccc}
		&&&&&                                         $1$                                                             &&&&&\\
		&&&&                         $1$               &&         $1$                    			                       &&&&\\
		&&&                  $1$     &&               $x$         &&  	           $1$             	                   &&&\\
		&&            $1$     &&  $x^2+1$             &&        $x^2+1$           &&      $1$               		  &&\\
		&    $1$      &&  $x(x^2+2)$  &&      $2 + 3 x^2 + x^4$   &&          $x (x^2+2)$  &&            $1$          &\\
		$1$  &&$x^4+3 x^2+1$  &&  $x^6+5 x^4+7 x^2+2$ &&     $x^6+5 x^4+7 x^2+2$   &&     $x^4+3 x^2+1$  &&     $1$   \\
	\end{tabular}}
  \end{center}
\caption{The gibonomial triangle.} \label{tabla3}
\end{table}

\begin{theorem}\label{gcdstarofdavidforGibonomials} Let $S$ be the star of David as in Figure \ref{starofDavidF} part (a) on page~\pageref{starofDavidF}.
If $a_1, a_2, a_3, b_1, b_2$, and $b_3$ are the vertices of $S$ in the gibonomial triangle, then
	\begin{enumerate}[(1)]
		\item  $a_1a_2a_3 = b_1b_2b_3$.
		\item $\gcd(a_1,a_2,a_3)= \gcd(b_1,b_2,b_3)$.
	\end{enumerate}
\end{theorem}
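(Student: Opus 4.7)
For part (1), I would expand directly. Writing $\left[{n \brack r}\right] = f_n^*(x)/\bigl(f_r^*(x) f_{n-r}^*(x)\bigr)$, both $a_1 a_2 a_3$ and $b_1 b_2 b_3$ simplify to the common expression
\[
\frac{f_{n-1}^*(x)\, f_n^*(x)\, f_{n+1}^*(x)}{f_{r-1}^*(x)\, f_r^*(x)\, f_{r+1}^*(x)\, f_{n-r-1}^*(x)\, f_{n-r}^*(x)\, f_{n-r+1}^*(x)},
\]
so they coincide; this is the identity already verified in \cite{koshygibonomial}.

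For part (2), my strategy is to check that $v_P(g_a) = v_P(g_b)$ for every irreducible $P(x) \in \mathbb{Z}[x]$, where $g_a := \gcd(a_1, a_2, a_3)$ and $g_b := \gcd(b_1, b_2, b_3)$. I will rely on the standard factorization
\[ F_n(x) = \prod_{d \mid n,\; d \ge 2} \Phi_d(x) \]
into pairwise coprime monic irreducibles $\Phi_d(x) \in \mathbb{Z}[x]$. If $P$ is not one of the $\Phi_d$, then $P$ divides no Fibonacci polynomial, hence no gibonomial, and the two valuations both vanish. Otherwise $P = \Phi_d$ for a unique $d \ge 2$ with $v_P(F_m(x)) = 1$ exactly when $d \mid m$. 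Combining this with $v_P(f_k^*(x)) = \lfloor k/d \rfloor$ yields the Kummer-style formula
\[
v_P\!\left(\left[{N \brack K}\right]\right) = \lfloor N/d \rfloor - \lfloor K/d \rfloor - \lfloor (N-K)/d \rfloor \in \{0, 1\},
\]
which equals $1$ precisely when $(K \bmod d) + ((N-K) \bmod d) \ge d$.

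Let $u := r \bmod d$ and $v := (n-r) \bmod d$, and split into two cases. In the generic case $1 \le u, v \le d-2$, substitution into the six formulas above yields $v_P(a_i) = v_P(b_i)$ for each $i$, with the three equal pairs testing the conditions $u + v \ge d+1$, $u + v \ge d$, and $u + v \ge d-1$ respectively. Therefore $v_P(g_a) = v_P(g_b)$ equals $1$ when $u + v \ge d + 1$ and $0$ otherwise. In any boundary case (when $u$ or $v$ lies in $\{0, d-1\}$) a one-line inspection gives $v_P(g_a) = v_P(g_b) = 0$. For example, $u = 0$ forces $v_P(a_1) = v_P(b_3) = 0$, because in both of those carry conditions the first summand is $0$ and the second summand is at most $d-1$; the other three boundary situations ($u = d-1$, $v = 0$, $v = d-1$) are dispatched analogously.

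The main obstacle will be keeping the boundary bookkeeping tidy: the four boundary subcases can overlap (for instance $u = v = 0$ when $d \mid \gcd(r, n-r)$), but every overlap only reinforces the conclusion $v_P(g_a) = v_P(g_b) = 0$. Once equality of valuations holds at every irreducible $P$, the conclusion $g_a = g_b$ follows, proving part (2).
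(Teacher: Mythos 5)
Your proposal is correct, and for part (2) it takes a genuinely different route from the paper. The paper does not actually write out a proof: it cites Koshy for part (1) and, for part (2), simply asserts that the argument is ``similar to'' Hillman and Hoggatt's proof of Gould's conjecture for the numerical Fibonomial coefficients, leaving the adaptation to polynomials to the reader. Your part (1) is the same direct cancellation Koshy performs. Your part (2), by contrast, is a self-contained Kummer-style valuation argument: it rests on the cyclotomic-type factorization $F_n(x)=\prod_{d\mid n,\,d\ge 2}\Phi_d(x)$ into pairwise coprime irreducibles, from which $v_P(f_k^*(x))=\lfloor k/d\rfloor$ and the carry criterion follow; I checked the six carry conditions and they do pair up exactly as you say ($a_i$ with $b_i$, testing $u+v\ge d+1,\ d,\ d-1$ in the generic case), and each of the four boundary subcases kills one vertex in each triangle, so both gcds have valuation $0$ there. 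Two remarks. First, the factorization with \emph{irreducible} $\Phi_d$ is a nontrivial external input (Webb--Parberry, Hoggatt--Long) and should be cited; alternatively, irreducibility is not really needed --- pairwise coprimality of the $\Phi_d$ together with the squarefreeness of $F_n(x)$ (its roots $2i\cos(k\pi/n)$ are distinct) already gives $v_P(F_m)=\mathbf{1}_{d\mid m}$ for every irreducible factor $P$ of $\Phi_d$, which is all the argument uses. Second, since every gibonomial is monic, hence primitive, no integer prime contributes to the gcd in $\mathbb{Z}[x]$, so equality of valuations at polynomial irreducibles does suffice. What your approach buys is an explicit description of the common gcd as a product of the $\Phi_d$ satisfying the carry inequality; what it gives up is portability to the numerical Fibonomial setting, where $v_p(F_m)$ is governed by ranks of apparition rather than a clean squarefree factorization, which is presumably why the paper defers to Hillman--Hoggatt instead.
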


\begin{proof} Koshy \cite{koshygibonomial} proved part (1) while Hillman and Hoggatt \cite{hoggattHillman} proved part (2) for Fibonomial coefficients (numerical).
The proof of part (2) is similar to the proof for Fibonomial coefficients.
\end{proof}

\section{Geometric interpretation of some identities of GFP}\label{further_results}

The aim of this section is to give geometrical interpretations of some polynomial identities that are known for
the Fibonacci numbers.  The novelty of this section is that we extend some well-known numerical identities to GFP  and provide
geometric proofs for these identities instead of the classical mathematical induction proofs.

Hosoya type triangles (polynomial and numeric) are good tools to discover, prove, or represent theorems geometrically.
Some properties that have been found and proved algebraically, are easy to understand when interpreted geometrically using  this triangle.
We now discuss some examples on how geometry of the triangle can be used to represent identities. The examples given in the following discussion are only
for the case in which the Hosoya polynomial triangle denoted by $H_{F}(x)$ has products of Fibonacci polynomials as entries. With this triangle in mind  we introduce a notation that will
be used in following examples.  We define an $n$-initial triangle $H_{F}^{n}(x)$ as the finite triangular arrangement formed by the first $n$-rows of the mentioned
Hosoya triangle with non-zero entries. Note that the initial triangle is the equilateral sub-triangle of the Hosoya triangle as in
Table \ref{tabla_equivalent} on page~\pageref{tabla_equivalent}  without the entries containing the factor $G_0$. For instance,
Table \ref{tabla2} on page~\pageref{tabla2} represents the $5$th initial triangle of $H_{F}(x)$.

If $F_{n}^{\prime}(x)$ represent the derivative of the  Fibonacci polynomial $F_{n}(x)$, then $F_{n}^{\prime}(x) =\sum_{k=1}^{n-1}F_{k}(x)F_{n-k}(x)$ (see \cite{koshy}).
The geometric  representation of this property in $H_{F}^{n}(x)$ is as follows: the derivative of the first point of the $n$th row of $H_{F}^{n}(x)$
 is equal to the sums of all points of the ($n-1$)th row of $H_{F}^{n}(x)$ (see Table \ref{tabla2} on page~\pageref{tabla2}). We have observed that this property
 implies that the integral of all points of $H_{F}^{n-1}(x)$  is equal to the sum of all points of one edge of the
 $H_{F}^{n}(x)$, where the constant of integration is $\lceil{n/2}\rceil$. This result is stated formally in Proposition \ref{integral}.

\begin{proposition}\label{integral}
Let $C$ be the constant of integration. Then
\begin{enumerate}
\item \begin{eqnarray*}
H(n,1)&=&\sum_{k=1}^{n-1} \int {H(n-1,k)}.
\end{eqnarray*}
Equivalently,
 \begin{eqnarray*}
F_{n}(x)&=&\sum_{k=1}^{n-1} \int {F_{k}(x)F_{n-k}(x)},
\end{eqnarray*}
where $C=1$ if $n$ is odd and zero otherwise.

\item
\begin{eqnarray*}
	H(n+1,1)+H(n,1)-1		&=&x \sum_{r=1}^{n} \sum_{k=1}^{r-1} \int {H(r-1,k)}.
	\end{eqnarray*}
Equivalently,
 \begin{eqnarray*}
	F_{n+1}(x)+F_{n}(x)-1	&=&x\sum_{r=1}^{n} \sum_{k=1}^{r-1} \int {F_{k}(x)F_{r-k}(x)},
\end{eqnarray*}
where $C=\lceil{n/2}\rceil$.

\end{enumerate}
\end{proposition}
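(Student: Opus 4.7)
My plan is to derive both parts directly from the product-derivative identity $F_n'(x) = \sum_{k=1}^{n-1} F_k(x) F_{n-k}(x)$ quoted from Koshy immediately before the proposition, together with the telescoping consequence of the Fibonacci polynomial recurrence $F_{n+1}(x) = x F_n(x) + F_{n-1}(x)$. Part (2) then drops out of Part (1) by summing over $r$ and multiplying through by $x$.

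For Part (1), I will antidifferentiate the derivative identity term by term. Since each $F_k(x) F_{n-k}(x)$ is a polynomial, this yields $F_n(x) = \sum_{k=1}^{n-1} \int F_k(x) F_{n-k}(x)\, dx + C$ for a single undetermined constant $C$. Evaluating both sides at $x = 0$ with the natural convention that each antiderivative vanishes at $x = 0$ forces $C = F_n(0)$. A short induction on the recurrence using $F_0(0) = 0$ and $F_1(0) = 1$ then shows $F_n(0) = 1$ when $n$ is odd and $F_n(0) = 0$ when $n$ is even, which is exactly the stated value of $C$.

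For Part (2), I will first establish the telescoping identity $x \sum_{r=1}^{n} F_r(x) = F_{n+1}(x) + F_n(x) - 1$. Rewriting the recurrence as $x F_r(x) = F_{r+1}(x) - F_{r-1}(x)$ and summing from $r = 1$ to $n$, the right-hand side collapses to $(F_n(x) + F_{n+1}(x)) - (F_0(x) + F_1(x)) = F_{n+1}(x) + F_n(x) - 1$. Summing the identity of Part (1) over $r$ from $1$ to $n$, multiplying by $x$, and substituting the telescoping identity on the left then produces the desired equation; the accumulated constant of integration is $\sum_{r=1}^{n} F_r(0) = \lceil n/2 \rceil$, since the nonzero contributions come precisely from the $\lceil n/2 \rceil$ odd values of $r$ in $\{1, 2, \ldots, n\}$.

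The only delicate point (and the closest thing to an obstacle) is tracking the integration constants consistently across the double sum; once this is handled by evaluating the various antiderivatives at $x = 0$, both statements follow by short computations from the quoted derivative formula and the three-term recurrence.
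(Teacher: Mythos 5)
Your proposal is correct and follows essentially the same route as the paper: part (1) by integrating the quoted derivative identity $F_n'(x)=\sum_{k=1}^{n-1}F_k(x)F_{n-k}(x)$, and part (2) by summing part (1) over $r$ and invoking $F_{n+1}(x)+F_n(x)-1=x\sum_{i=1}^n F_i(x)$ (which the paper cites from Koshy and you re-derive by telescoping the recurrence). Your treatment is in fact more careful than the paper's, since you explicitly pin down the integration constants via $F_r(0)$, which the paper leaves implicit.
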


\begin{proof}
The proof of part (1) is straightforward using the geometric interpretation of $F_{n}^{\prime}(x)$.

We prove part (2). From part (1) and from the geometry of $H_{F}^{n-1}(x)$ it is easy to see that
$\sum_{r=1}^{n} H(k,1)=\sum_{r=1}^{n-1} \sum_{k=1}^{r-1} \int {H(r-1,k)}$.
From Koshy \cite[Theorem 37.1]{koshy} we know that
$F_{n+1}(x)+F_{n}(x)-1=x\sum_{i=1}^{n} F_{i}(x)$.
This and the fact that $H(t,1)=F_{t}(x)$ for all $t\ge 1$ completes the proof.
\end{proof}

\begin{lemma}\label{Propiedad:de:las:paralelas}
If $i$, $j$, $k$, and $r$ are nonnegative integers with $k+j\le r$, then in the Hosoya polynomial triangle it holds that
\[
	H(r+2i,k+j+i)-H(r+2i,k+i)=(-1)^i\gamma(x)(H(r,k+j)-H(r,k)).
\]
\end{lemma}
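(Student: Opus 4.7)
The plan is to translate the identity into a statement about products of generalized Fibonacci polynomials via Proposition~\ref{lemma0}, then verify it using the Binet formulas. Setting $M=k$ and $N=r-k$ and using $H(r,k)=G_k(x)G_{r-k}(x)$, the claim becomes the Catalan-type identity
\begin{equation*}
G_{M+i+j}(x)\,G_{N+i-j}(x)-G_{M+i}(x)\,G_{N+i}(x)=(-\gamma(x))^{i}\bigl(G_{M+j}(x)\,G_{N-j}(x)-G_M(x)\,G_N(x)\bigr),
\end{equation*}
in which the parameter $i$ shifts both indices of each product by the same amount.

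I then invoke the Binet formulas~\eqref{bineformulados} and~\eqref{bineformulauno}, writing $G_t=c_1 a^t+c_2 b^t$ for suitable constants $c_1,c_2$ (depending on whether the GFP is of Lucas or Fibonacci type), where $a,b$ are the roots of $z^2-d(x)z-g(x)=0$, so that $ab=-g(x)=-\gamma(x)$. Expanding the two products and subtracting, the pure $a^{M+N}$ and $b^{M+N}$ contributions cancel and I am left with
\begin{equation*}
G_{M+j}\,G_{N-j}-G_M\,G_N=c_1 c_2\bigl(a^{M+j}b^{N-j}+a^{N-j}b^{M+j}-a^{M}b^{N}-a^{N}b^{M}\bigr).
\end{equation*}
Regrouping the right-hand side as $(a^{M+j}b^{N-j}-a^{M}b^{N})+(a^{N-j}b^{M+j}-a^{N}b^{M})$ and factoring yields the clean closed form
\begin{equation*}
G_{M+j}\,G_{N-j}-G_M\,G_N=-c_1 c_2\,(ab)^{M}(a^{j}-b^{j})\bigl(a^{N-M-j}-b^{N-M-j}\bigr).
\end{equation*}

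The key observation is now immediate: shifting $(M,N)\mapsto(M+i,N+i)$ leaves the two differences $(a^{j}-b^{j})$ and $(a^{N-M-j}-b^{N-M-j})$ untouched and multiplies the prefactor $(ab)^{M}$ by $(ab)^{i}=(-\gamma(x))^{i}$, producing precisely the claimed scaling. The main obstacle is the bookkeeping in the factorization step; an alternative by induction on $i$ using the recurrence $G_{t+1}=d(x)G_{t}+g(x)G_{t-1}$ is feasible but splits into several sub-cases depending on the parities of the indices, whereas the Binet approach dispatches both Lucas-type and Fibonacci-type GFP simultaneously. The hypothesis $k+j\le r$ merely guarantees that all indices remain nonnegative so every $G_t$ that appears is defined.
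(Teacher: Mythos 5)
Your argument is correct, but it is a genuinely different route from the paper's. The paper proves this lemma by induction on $i$, using the ``rectangle property'' $H(n,m)=\delta(x)H(n-1,m)+\gamma(x)H(n-2,m)$ (and its companion $H(n,m)=\delta(x)H(n-1,m-1)+\gamma(x)H(n-2,m-2)$) to step from level $r+2i$ down to level $r$; this stays entirely inside $\mathbb{Z}[x]$, needs no case split between Lucas and Fibonacci type, and works even in the degenerate situation $d^2(x)+4g(x)\equiv 0$ where the Binet formula \eqref{solutionrecurrencerelationuno} is not available. You instead pass through Proposition~\ref{lemma0} and the Binet representation $G_t=c_1a^t+c_2b^t$, and your factorization
\[
G_{M+j}(x)G_{N-j}(x)-G_M(x)G_N(x)=-c_1c_2\,(ab)^{M}\bigl(a^{j}-b^{j}\bigr)\bigl(a^{N-M-j}-b^{N-M-j}\bigr)
\]
is verified correctly: the $a^{M+N}$ and $b^{M+N}$ terms cancel, the regrouping is right, and the shift $(M,N)\mapsto(M+i,N+i)$ multiplies the expression by $(ab)^i=(-\gamma(x))^i$. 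What your approach buys is a closed form for the ``defect'' $G_{M+j}G_{N-j}-G_MG_N$ that makes the scaling behaviour transparent and proves the statement for all $i$ at once with no induction; the cost is that you work in the extension field containing $a$ and $b$ (with possibly negative exponents, harmless since $ab=-g(x)\neq 0$) and must implicitly assume $a\neq b$.

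One point you should make explicit: the identity you actually prove carries the factor $(-\gamma(x))^{i}=(-1)^{i}\gamma^{i}(x)$, whereas the lemma as printed says $(-1)^{i}\gamma(x)$. These differ whenever $\gamma(x)\neq\pm 1$ and $i\geq 2$ (e.g., for Jacobsthal polynomials with $i=2$, $j=k=1$, $r=2$ the left side is $-4x^2$ while $(-1)^2\gamma(x)\bigl(H(2,2)-H(2,1)\bigr)=-2x$). Your version is the correct one --- it is the form needed for Theorem~\ref{johnson}, whose factor is $(-1)^t g^t(x)$ --- so you have silently repaired a typo in the statement rather than proved it as written; that should be said rather than left implicit.
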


The proof of the Lemma \ref{Propiedad:de:las:paralelas} follows using induction
and the rectangle property which states that $H(n,m)=\delta(x)H(n-1,m)+\gamma(x)H(n-2,m)$ (see Figure \ref{rectangle}).

\begin{figure} [!ht]
\begin{center}
\includegraphics[width=125mm]{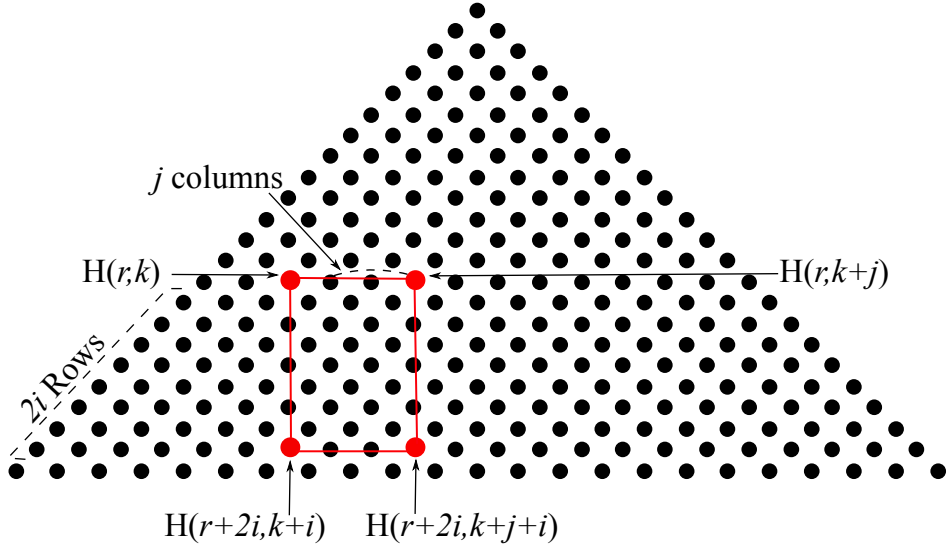}
\end{center}
\caption{Property of Rectangle.} \label{rectangle}
\end{figure}

It is well known that the Catalan identity is a generalization of the Cassini identity. In Wolfram MathWord there is another numerical generalization of the Cassini and Catalan identities, called the Johnson identity \cite{WolframMathWorld}. It states that for the Fibonacci number sequence $\{F_{n}\}$,
$$F_{a}F_{b}-F_{c}F_{d}=(-1)^{r}\left(F_{a-r}F_{b-r}-F_{c-r}F_{d-r}\right)$$
 where $a, b, c, d$, and $r$ are arbitrary integers with $a+b=c+d$.

The example in Figure \ref{Cassini_Catalan} gives a geometric representation of the numeric identities
(the same representation holds for polynomials). To represent the Cassini identity we take two consecutive points in the Hosoya triangle along a horizontal line such that one point is located in the central column of the triangle, see Figure \ref{Cassini_Catalan}. We then pick two other arbitrary
consecutive points $P_1$ and $P_2$ such that they form a vertical rectangle along with the first pair of points. Now it easy to see that subtracting the horizontal points $P_1$ and $P_2$ gives $\pm 1$. Since the entries of the triangle are products of
Fibonacci numbers, we obtain the Cassini identity.

The second example in Figure \ref{Cassini_Catalan} represent the Catalan identity. In this case we take any two horizontal points
$Q_1$ and $Q_2$ where $Q_1$ is located (arbitrarily) in the central column of the triangle. We then pick other two arbitrary
points $P_1$ and $P_2$ such that those form a rectangle with $Q_1$ and $Q_2$. Now it easy to see that subtracting
the horizontal points $P_1$ and $P_2$ gives $\pm (Q_{1}- Q_{2})$. Since the entries of the triangle are products of
Fibonacci numbers, we obtain the Catalan identity. Note that if we eliminate the condition that $Q_1$
must be in the central column, we obtain the Johnson identity.

Theorem \ref{johnson} is the generalization of the Johnson identity.  As a consequence of Theorem \ref{johnson} we state Corollary \ref{catalan:casini} --this generalizes Catalan and Cassini identities for GFP--.

\begin{figure} [!ht]
\begin{center}
\includegraphics[width=150mm]{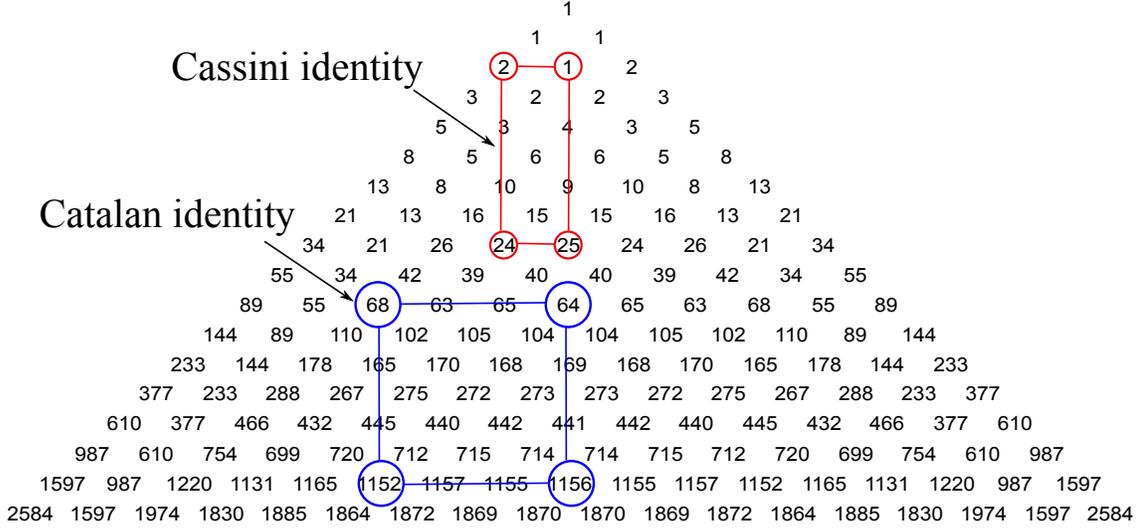}
\end{center}
\caption{Cassini and Catalan identities.} \label{Cassini_Catalan}
\end{figure}

\begin{theorem}\label{johnson}
Let $a, b, c, d$ and $t$ be nonnegative integers with $\min\{a,b,c,d\}-t$ non-negative. If $\{G_n(x)\}$ is GFP and $a+b=c+d$ then
\[
	\begin{vmatrix}
	G_{a}(x) & G_{c}(x) \\
	G_{d}(x) & G_{b}(x) \end{vmatrix}
	= (-1)^t g^t(x)
	\begin{vmatrix} G_{a-t}(x) & G_{c-t}(x) \\
	G_{d-t}(x) & G_{b-t}(x) \end{vmatrix}.
	\]
\end{theorem}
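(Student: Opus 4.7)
The plan is to deduce Theorem \ref{johnson} as a direct specialization of Lemma \ref{Propiedad:de:las:paralelas}, which already encodes the ``shift by $(-1)^{i}g^{i}(x)$'' phenomenon along horizontal lines of constant row-sum in the Hosoya polynomial triangle. Because the determinant changes sign under the swap $(a,b)\leftrightarrow(c,d)$, and the displayed identity is symmetric in that swap, I may assume without loss of generality that $a\ge c$, which forces $b\le d$ because $a+b=c+d$.

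With this arrangement in hand, I introduce the parameters
\[
r:=a+b-2t,\qquad k:=c-t,\qquad j:=a-c,\qquad i:=t,
\]
and verify the hypotheses of Lemma \ref{Propiedad:de:las:paralelas}: $k\ge 0$ follows from $c\ge t$; $j\ge 0$ from the WLOG step; and $k+j\le r$ rearranges to $b\ge t$. All three reduce to the standing assumption $\min\{a,b,c,d\}-t\ge 0$. A routine substitution then yields $k+j+i=a$, $(r+2i)-(k+j+i)=b$, $k+i=c$, $(r+2i)-(k+i)=d$, together with $k+j=a-t$, $r-(k+j)=b-t$, and $r-k=d-t$. Unfolding $H(r,k)=G_{k}(x)G_{r-k}(x)$ via Proposition \ref{lemma0} converts the two sides of Lemma \ref{Propiedad:de:las:paralelas} entry-by-entry into $G_a(x)G_b(x)-G_c(x)G_d(x)$ and $(-1)^t g^t(x)\bigl(G_{a-t}(x)G_{b-t}(x)-G_{c-t}(x)G_{d-t}(x)\bigr)$, which is exactly the Johnson identity.

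The only genuine obstacle is the bookkeeping: one must check that the WLOG swap leaves the determinant invariant up to sign, and that the single index condition $k+j\le r$ is equivalent to $\min\{a,b,c,d\}\ge t$ once $a\ge c$ is fixed. If a completely self-contained argument is preferred, an alternative is to plug the Binet representation \eqref{solutionrecurrencerelationuno} into both determinants: writing $G_n(x)=t_1 a^n+t_2 b^n$ with $ab=-g(x)$ and using $a+b=c+d$, the pure powers $t_1^2 a^{a+b}$ and $t_2^2 b^{a+b}$ cancel identically, so both differences reduce to $t_1 t_2$ times an expression of the form $(ab)^{\min}(a^{\ast}+b^{\ast})$; the uniform shift by $-t$ in all four indices then factors out $(ab)^{-t}=(-g(x))^{-t}$ on the shifted side, producing the claimed identity.
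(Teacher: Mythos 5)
Your main argument is essentially the paper's own proof: the substitution $i=t$, $k=c-t$, $j=a-c$, $r=a+b-2t$ into Lemma \ref{Propiedad:de:las:paralelas} combined with Proposition \ref{lemma0} is exactly the parametrization the authors use ($a=k+j+i$, $b=r+i-k-j$, $c=k+i$, $d=r+i-k$). Your additions --- the WLOG $a\ge c$ needed to make $j$ nonnegative, the explicit check that $k+j\le r$ amounts to $\min\{a,b,c,d\}\ge t$, and the alternative Binet-formula computation --- are correct and in fact tidy up details the paper leaves implicit.
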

\begin{proof}
Let $i$, $j$, $k$, and $r$ be nonnegative integers such that $a=k+j+i$, $b=r+i-k-j$, $c=k+i$, $d=r+i-k$, and $t=i$. Therefore, by
Lemma (\ref{Propiedad:de:las:paralelas}) and  Proposition (\ref{lemma0}) the equality holds.
\end{proof}

\begin{corollary}\label{catalan:casini}
	Suppose that $m, r$ are non-negative integers. If $\{G_n(x)\}$ is GFP, then
	
		\begin{enumerate}
		\item (Catalan identity)  	
		\[
		\left|\begin{array}{ll}
		G_{m}(x) & G_{m+r}(x) \\
		G_{m-r}(x) & G_{m}(x)
		\end{array}\right|
		=(-1)^{m-r} g^{m-r}(x)
		\left|\begin{array}{ll} G_{r}(x) & G_{2r}(x) \\
		G_{0}(x) & G_{r}(x)
		\end{array}\right|,
		\]	
		
		\item (Cassini identity)
		\[
		\left|\begin{array}{ll}
		G_{m}(x) & G_{m+1}(x) \\
		G_{m-1}(x) & G_{m}(x)
		\end{array}\right|
		=(-1)^{m-1} g(x)^{m-1}
			\left|\begin{array}{ll}
		G_{1}(x) & G_{2}(x) \\
		G_{0}(x) & G_{1}(x)
		\end{array}\right|.
		\]
	\end{enumerate}
\end{corollary}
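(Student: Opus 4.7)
The plan is to derive both parts as immediate specializations of Theorem \ref{johnson}. The entire corollary amounts to choosing the free parameters $a,b,c,d,t$ carefully and verifying both the hypothesis $a+b=c+d$ and the non-negativity condition $\min\{a,b,c,d\}-t\ge 0$; no new identity-level work is needed.

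For the Catalan identity (part 1), I would set $a=m$, $b=m$, $c=m+r$, $d=m-r$, and $t=m-r$. First I verify $a+b=2m=(m+r)+(m-r)=c+d$, so the hypothesis of Theorem \ref{johnson} is met. The condition $\min\{a,b,c,d\}-t=(m-r)-(m-r)=0$ is non-negative (this implicitly asks $m\ge r$, which is in any case required for the index $m-r$ on the left-hand side to be admissible). With these choices the left-hand determinant of Theorem \ref{johnson} coincides with the Catalan determinant, and the shifted indices become $a-t=r$, $b-t=r$, $c-t=2r$, $d-t=0$, so the shifted determinant is exactly the one displayed on the right of part (1), with the expected sign $(-1)^{m-r}$ and factor $g^{m-r}(x)$.

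For the Cassini identity (part 2), I would simply take $r=1$ in part (1); equivalently, I would apply Theorem \ref{johnson} directly with $a=m$, $b=m$, $c=m+1$, $d=m-1$, and $t=m-1$, which pushes one corner of the determinant down to $G_0(x)$ and gives the displayed right-hand side with entries $G_1(x),G_2(x),G_0(x),G_1(x)$.

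There is no substantive obstacle here; the argument is pure index bookkeeping on top of Theorem \ref{johnson}. The only point worth double-checking is that in both cases the non-negativity condition is tight ($\min - t = 0$), so the shift $t=m-r$ (respectively $t=m-1$) is the unique choice that reduces one entry of the shifted determinant to $G_0(x)$ while keeping all other indices admissible. Once that is noted, parts (1) and (2) follow by direct substitution.
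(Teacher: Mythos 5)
Your proposal is correct and matches the paper's own proof, which likewise obtains both identities by substituting the appropriate indices into Theorem \ref{johnson} (the paper merely adds the geometric remark that the four resulting summands are vertices of a rectangle in the Hosoya triangle). Your explicit choice $a=b=m$, $c=m+r$, $d=m-r$, $t=m-r$ and the verification of $a+b=c+d$ and $\min\{a,b,c,d\}-t\ge 0$ is exactly the bookkeeping the paper leaves implicit.
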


\begin{proof}  The proof is straightforward when the appropriate values of $m$ and $r$ are substituted in Theorem \ref{johnson}
(see Figure \ref{Cassini_Catalan}). If we evaluate both determinants in Theorem \ref{johnson} we obtain four summands that are four points
in the Hosoya polynomial triangle. Note that these four points are the vertices of a rectangle in the Hosoya triangle.
\end{proof}

For the next result we introduce the function
 \[
 I(n)=
 \begin{cases}
         g(x), & \mbox{if $n$ is even;} \\
         1, & \mbox{if $n$ is odd.}
\end{cases}
 \]

 \begin{figure} [!ht]
\begin{center}
\includegraphics[width=100mm]{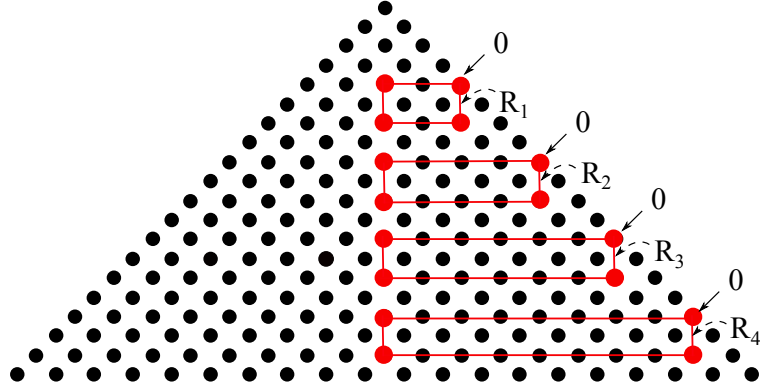}
\end{center}
\caption{Geometrical interpretation of Theorem \ref{sums:property}. } \label{rectangleTheorem12}
\end{figure}

We observe that if we have a Hosoya triangle where the entries are products of GFP of Fibonacci type, then we can draw rectangles
with two vertices in the central line (perpendicular bisector) of the triangle and a third vertex on the edge of the triangle
(see Figure \ref{rectangleTheorem12}). Let $R$ be a rectangle with the extra condition that the upper vertex points are multiplied by $g(x)$,
then Lemma \ref{Propiedad:de:las:paralelas} guarantees that the sum of the two top vertices of $R$  is equal to  the sum of the remaining vertices of $R$.
Since the points in the edge of this triangle are equal to zero, we have that one of the vertices of $R$ is equal to zero. The other vertex in the same vertical
line is a GFP multiplied by one. This geometry gives rise to Theorem \ref{sums:property}.

\begin{theorem}\label{sums:property} Suppose that $n$ and $k$ are positive integers. If $\{G_n^{'}(x)\}$ is of Fibonacci type  then
	$$
	\sum_{j=2}^{2n+1} I(j)G_{j}^{'2}(x) =\sum_{j=1}^n G_{4j+1}^{'}(x)
	$$
	and
	$$
	\sum_{j=2}^{2n+1} (-1)^{j+1}I^2(j)G_{2j}^{'2} (x)=  d(x)\sum_{j=1}^{n}G_{8j+2}^{'}(x) .
	$$
\end{theorem}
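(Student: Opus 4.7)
The plan is to prove each summation identity termwise: I will show that each summand on the left equals the corresponding summand on the right, after which the full identities follow by adding $n$ instances. The crucial observation (suggested by Figure \ref{rectangleTheorem12}) is that for Fibonacci type GFP the edge entries $H(r,0)=G_0'(x)G_r'(x)$ vanish, so that rectangles of the sort described in Lemma \ref{Propiedad:de:las:paralelas} whose upper-left corner sits on the left edge produce genuine two-term identities rather than four-term ones.

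\textbf{First identity.} For each fixed $t$ with $1 \le t \le n$ I will apply Lemma \ref{Propiedad:de:las:paralelas} with parameters $i=1$, $k=0$, $j=2t$, $r=4t$. Using $H(4t,0)=G_0'(x)G_{4t}'(x)=0$, $G_1'(x)=1$, and Proposition \ref{lemma0}, the lemma reduces to
\[
G_{2t+1}^{\prime 2}(x) - G_{4t+1}'(x) = -g(x)\,G_{2t}^{\prime 2}(x),
\]
that is, $G_{2t+1}^{\prime 2}(x)+g(x)G_{2t}^{\prime 2}(x)=G_{4t+1}'(x)$. Summing from $t=1$ to $n$, and noting that $I(j)=g(x)$ when $j$ is even and $I(j)=1$ when $j$ is odd exactly groups the indices $j=2t$ and $j=2t+1$ for $t=1,\dots,n$, yields the first claim.

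\textbf{Second identity.} For each $t$ with $1\le t\le n$ I will apply Lemma \ref{Propiedad:de:las:paralelas} with parameters $i=2$, $k=0$, $j=4t$, $r=8t$. Again $H(8t,0)=0$, $G_2'(x)=d(x)$, and Proposition \ref{lemma0} give
\[
G_{4t+2}^{\prime 2}(x) - d(x)\,G_{8t+2}'(x) = g^{2}(x)\,G_{4t}^{\prime 2}(x),
\]
i.e.\ $G_{4t+2}^{\prime 2}(x)-g^{2}(x)G_{4t}^{\prime 2}(x)=d(x)G_{8t+2}'(x)$. Summing over $t$, the combination $(-1)^{j+1}I^2(j)$ places a $+1$ in front of $G_{2j}^{\prime 2}$ when $j$ is odd and a $-g^2(x)$ when $j$ is even, which is exactly the pairing just produced; this delivers the second claim.

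The only step requiring thought is guessing the right quadruple $(r,k,j,i)$ in each case, and this is exactly what the geometric picture in Figure \ref{rectangleTheorem12} prescribes: pick a rectangle whose top-left vertex lies on the zero edge, whose top-right vertex sits on the central column (so it equals $G_m^{\prime 2}(x)$), whose bottom vertices are a row of length $2i$ below with widths fixed by $j$. The vertical jump $i=1$ produces the first identity; the jump $i=2$ produces the second. Once those rectangles are identified, the proof is a direct substitution and a short rewriting, with no genuine obstacles beyond matching indices.
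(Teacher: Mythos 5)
Your proof is correct and follows essentially the same route as the paper: both arguments group the sum by the parity of $j$ and then reduce each paired term to the rectangle property of Lemma \ref{Propiedad:de:las:paralelas} applied to a rectangle with one vertex on the zero edge (using $H(r,0)=G_0'(x)G_r'(x)=0$ and $G_1'(x)=1$, $G_2'(x)=d(x)$); the only cosmetic difference is that for the second identity you invoke the lemma once with $i=2$, whereas the paper inserts and removes $g(x)G_{4j+1}^{\prime 2}(x)$ so as to apply the $i=1$ identity twice and then finishes with the recurrence $G_{8j+3}'(x)-g(x)G_{8j+1}'(x)=d(x)G_{8j+2}'(x)$. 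One caveat worth stating explicitly: your $i=2$ application produces the factor $g^2(x)$, which requires Lemma \ref{Propiedad:de:las:paralelas} to read $(-1)^i\gamma^i(x)$ rather than the printed $(-1)^i\gamma(x)$ --- the exponent is evidently a typo in the paper (it is forced by Theorem \ref{johnson}, whose proof takes $t=i$, and is confirmed by direct computation for, e.g., Jacobsthal polynomials), so your step is sound, but it does not follow from the lemma exactly as stated.
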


\begin{proof} First of all we recall that $G_{1}^{'}(x)=1$. We prove the first identity.
	\begin{eqnarray*}
		\sum_{j=2}^{2n+1} I(j)G_{j}^{'2} (x)&=&\sum_{j=1}^{n} (G_{2j+1}^{'2}(x)+ g(x)G_{2j}^{'2} (x))
		=\sum_{j=1}^n (G_{4j+1}^{'}(x)G_1^{'}(x)+G_{0}^{'}(x)G_{4j}^{'})\\
		&=&\sum_{j=1}^n G_{4j+1}^{'}(x).
	\end{eqnarray*}
	
	We now prove the second identity.  Let  $S:=\sum_{j=2}^{2n+1} (-1)^{j+1}I^2(j)G_{2j}^{'2} (x)$. Lemma \ref{Propiedad:de:las:paralelas}  implies that
	\begin{eqnarray*}
		S	&=&\sum_{j=1}^{n} (G_{4j+2}^{'2}(x)- g^2(x)G_{4j}^{'2} (x))\\
		&=&\sum_{j=1}^{n} \Big(\Big(G_{4j+2}^{'2}(x)+g(x)G_{4j+1}^{'2}(x)\Big)-g(x)\Big(G_{4j+1}^{'2}(x) +g(x)G_{4j}^{'2} (x)\Big)\Big).\\
	\end{eqnarray*}	
Since $G_{1}^{'}(x)=1$, we have
\begin{eqnarray*}
		S	&=&\sum_{j=1}^{n} \left(\left(G_{8j+3}^{'}(x)+g(x)G_{8j+1}^{'}(x)G_{0}^{'}(x)\Big)-g(x)\Big(G_{8j+1}^{'}(x)+g(x)G_{8j-1}^{'}(x)G_{0}^{'}(x)\right)\right)\\
		&=& G_{1}^{'}(x)\sum_{j=1}^{n} \Big(G_{8j+3}^{'}(x)-g(x)G_{8j+1}^{'}(x)\Big)=d(x)\sum_{j=1}^{n}G_{8j+2}^{'}(x) .
	\end{eqnarray*}
	This proves the theorem.
\end{proof}

Corollary \ref{Corollay:sums:property} provides a closed formula for special cases of Theorem \ref{sums:property}.
We use Figure \ref{rectangleCorollary14} to have a geometric interpretation of Corollary  \ref{Corollay:sums:property}.
For simplicity we only prove part (2) (part (1) is similar and we omit it). From the geometric point of view
Corollary  \ref{Corollay:sums:property} part (2) states that the sum of all points that are in the intersection of a
finite zigzag configuration and the central line of the triangle is the last
point of the zigzag configuration (see Figure \ref{rectangleCorollary14}). We now give more details of the validity of this statement.
 From the hypothesis of Corollary  \ref{Corollay:sums:property} we have that
 $g=1$ and  $H(0,k)=H(k,0)=0$ for every $k$. This and the definition of the Hosoya polynomial sequence, on page~\pageref{HosoyaSection}, imply that
$$H(r,k)= d(x) H(r-1,k)+ H(r-2,k)    \text{ and  } H(r,k)= d(x) H(r-1,k-1)+H(r-2,k-2).$$
Therefore the points depicted in Figure \ref{rectangleCorollary14} have the properties described in Table \ref{PointZigZag}.
\begin{table}[!ht]
	\begin{center}
		\begin{tabular}{llll}
			$p_{0}=0$,                    		&$p_{2}=d(x)p_{1}+p_{0}$,        		& $p_{4}=d(x)p_{3}+p_{2}$ & $p_{6}=d(x)p_{5}+p_{4}$ \\
			$p_{6}=d(x)p_{5}+p_{4}$,\quad \quad & $p_{8}=d(x)p_{7}+p_{6}$, \quad\quad&  \hspace{1cm}\dots  & $p_{4n}=d(x)p_{4n-1}+p_{4n-2}$.
		\end{tabular}
	\end{center}
	\caption{Properties of points in the Zigzag Figure \ref{rectangleCorollary14}.} \label{PointZigZag}
\end{table}

Since $g=1$, we have that $I(j)=1$ for all $j$. Therefore, $\sum_{j=1}^{2n+1} I(j)G_{j}^{'2}(x)$ is actually the sum of all points that are
in the intersection of the  zigzag diagram with central line of the triangle (see Figure \ref{rectangleCorollary14}). Thus,
$$d(x)\sum_{j=1}^{2n+1} G_{j}^{'2}(x) =p_{0}+d(x)p_{1}+d(x)p_{3}+d(x)p_{5}+d(x)p_{7}+\dots+d(x)p_{4n-1}.$$
 The first two terms of the right side of this sum are equal to
the third point of the  zigzag diagram (see Table \ref{PointZigZag} and Figure \ref{rectangleCorollary14}). Therefore, substituting them by $p_{2}$ we have
$$d(x)\sum_{j=1}^{2n+1} G_{j}^{'2}(x) =p_{2}+d(x)p_{3}+d(x)p_{5}+d(x)p_{7}+\dots+d(x)p_{4n-1}.$$
 The first two terms of the right side of this sums are equal to
the the fifth point of the  zigzag diagram (see Table \ref{PointZigZag} and Figure \ref{rectangleCorollary14}). Therefore, substituting them by $p_{4}$ we have
 $$d(x)\sum_{j=1}^{2n+1} G_{j}^{'2}(x) =p_{4}+d(x)p_{5}+d(x)p_{7}+\dots+d(x)p_{4n-1}.$$
 Similarly, we substitute $p_{4}+d(x)p_{5}$ by the seventh point of the  zigzag diagram. Thus,
 $$d(x)\sum_{j=1}^{2n+1} G_{j}^{'2}(x) =p_{6}+d(x)p_{7}+\dots+d(x)p_{4n-1}.$$
 We keep  systematically doing those substitutions to obtain
$$d(x)\sum_{j=1}^{2n+1} G_{j}^{'2}(x) =p_{4n}=G_{2n-1}^{'}(x)G_{2n}^{'}(x).$$ This completes the geometric proof of
Corollary \ref{Corollay:sums:property} part (2).

\begin{figure} [!ht]
	\begin{center}
		\includegraphics[width=120mm]{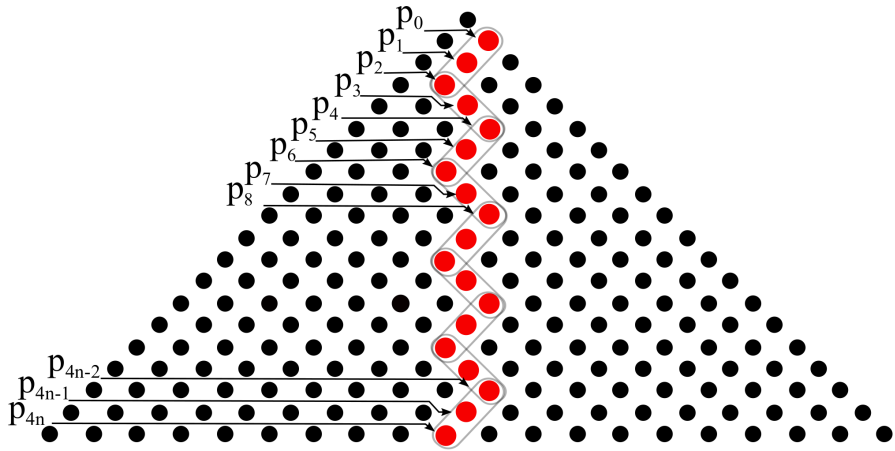}
	\end{center}
	\caption{Geometric interpretation of Corollary \ref{Corollay:sums:property}. } \label{rectangleCorollary14}
\end{figure}

\begin{corollary}\label{Corollay:sums:property}
	If $\{G_n^{'}(x)\}$ is of Fibonacci type then,
	\begin{enumerate}
		\item 	\[\sum_{j=1}^{n}g^{2(n-j)}G_{4j-3}^{'}(x)=\frac{G_{2n-1}^{'}(x)G_{2n}^{'}(x)}{d(x)},\]
		\item  if  $\{G_n^{'}(x)\}$ satisfies that  $g=1$, then
		\[	\sum_{j=1}^{2n-1} I(j)G_{j}^{'2}(x)=\frac{G_{2n-1}^{'}(x)G_{2n}^{'}(x)}{d(x)}.\]
	\end{enumerate}	
\end{corollary}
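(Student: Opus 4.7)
The plan is to prove Part (1) by induction on $n$ and to deduce Part (2) from Part (1) together with Theorem \ref{sums:property}.

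Set $S_n(x) := \sum_{j=1}^n g^{2(n-j)}(x)\, G'_{4j-3}(x)$. The base case $n=1$ is immediate: $S_1 = G'_1(x) = 1$ and $G'_1(x)\, G'_2(x)/d(x) = d(x)/d(x) = 1$. For the inductive step, separating off the $j=n$ summand and factoring $g^2(x)$ out of the remaining ones yields the recurrence $S_n = g^2(x)\, S_{n-1} + G'_{4n-3}(x)$. Combined with the inductive hypothesis $S_{n-1} = G'_{2n-3}(x)\, G'_{2n-2}(x)/d(x)$, the desired equality reduces to the single product identity
\[
G'_{2n-1}(x)\, G'_{2n}(x) - g^2(x)\, G'_{2n-3}(x)\, G'_{2n-2}(x) \;=\; d(x)\, G'_{4n-3}(x). \qquad (\star)
\]

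To establish $(\star)$, I would first apply the defining recurrences $G'_{2n} = d\, G'_{2n-1} + g\, G'_{2n-2}$ and $G'_{2n-1} = d\, G'_{2n-2} + g\, G'_{2n-3}$ to rewrite the left-hand side as $d\bigl[(G'_{2n-1})^2 + g\, (G'_{2n-2})^2\bigr]$; the $g^2\,G'_{2n-3}\,G'_{2n-2}$ correction is exactly what is needed for the surviving cross terms to collapse into $d\,g\,(G'_{2n-2})^2$. This reduces $(\star)$ to the Fibonacci-type convolution identity $(G'_{2n-1})^2 + g\,(G'_{2n-2})^2 = G'_{4n-3}$, which is the special case $M = 2n-2,\ N = 2n-1$ of the general relation $G'_{M+N} = G'_{M+1}\, G'_N + g\, G'_M\, G'_{N-1}$. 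This general identity follows directly from Binet's formula (\ref{bineformulauno}): the cross-terms $a^{M+1} b^N + g\, a^M b^{N-1}$ (and their conjugate pair) cancel because $ab = -g$, and the surviving sum $a^{M+N+1} + b^{M+N+1} + g(a^{M+N-1} + b^{M+N-1})$ telescopes to $(a-b)(a^{M+N} - b^{M+N})$ after substituting $g = -ab$, yielding $G'_{M+N}$ once divided by $(a-b)^2$. This algebraic chain is the main obstacle of the argument.

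For Part (2), specialize Part (1) to $g(x) = 1$ to obtain $\sum_{j=1}^n G'_{4j-3}(x) = G'_{2n-1}(x)\, G'_{2n}(x)/d(x)$. Then apply Theorem \ref{sums:property} with $n$ replaced by $n-1$ (and observe that $I(j) = 1$ for every $j$ when $g = 1$), which gives $\sum_{j=2}^{2n-1}\bigl(G'_j(x)\bigr)^2 = \sum_{j=1}^{n-1} G'_{4j+1}(x)$. Adding $\bigl(G'_1(x)\bigr)^2 = 1 = G'_1(x)$ to both sides and reindexing the right-hand sum by $j \mapsto j-1$ (which converts $G'_{4j+1}$ into $G'_{4j-3}$) produces $\sum_{j=1}^{2n-1}\bigl(G'_j(x)\bigr)^2 = \sum_{j=1}^n G'_{4j-3}(x) = G'_{2n-1}(x)\, G'_{2n}(x)/d(x)$, completing the argument.
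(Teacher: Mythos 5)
Your argument is correct, but it follows a genuinely different route from the paper's. The paper does not induct: it first derives the weighted telescoping identity \eqref{general}, $\sum_{j=1}^{m} d(x)g^{m-j}(x)G_{j}^2(x)=G_{m+1}(x)G_{m}(x)-g^{m}(x)G_{1}(x)G_{0}(x)$, by repeatedly folding the Hosoya recurrence along the central column of the triangle (the ``zigzag'' of Figure \ref{rectangleCorollary14}), and then pairs consecutive squares via $G_{2j+1}^{\prime 2}(x)+g(x)G_{2j}^{\prime 2}(x)=G_{4j+1}^{\prime}(x)G_{1}^{\prime}(x)+G_{0}^{\prime}(x)G_{4j}^{\prime}(x)$ to rewrite the left side of \eqref{general}, taken at the odd index $2n-1$, as $\sum_{j=1}^{n}g^{2(n-j)}(x)G_{4j-3}^{\prime}(x)$; both parts of the corollary fall out of that single computation, since for $g=1$ the weights vanish and $I(j)\equiv 1$. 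You instead isolate the one-step identity $(\star)$, $G'_{2n-1}G'_{2n}-g^2G'_{2n-3}G'_{2n-2}=d\,G'_{4n-3}$, and reduce it, via the recurrence and then Binet, to the convolution formula $G'_{M+N}=G'_{M+1}G'_{N}+g\,G'_{M}G'_{N-1}$ --- which is exactly the identity underlying the paper's pairing step --- and you then obtain Part (2) by splicing Part (1) with Theorem \ref{sums:property}, whereas the paper gets Part (2) for free from the same chain. The trade-off: the paper's route is uniform over both parts, yields the more general identity \eqref{general} (valid even when $G_0\neq 0$), and stays within the geometric theme of the section; yours is a self-contained algebraic induction that avoids the Hosoya-triangle telescoping at the cost of proving $(\star)$ separately and, for Part (2), leaning on Theorem \ref{sums:property} (note that invoking that theorem with $n-1$ requires $n\ge 2$, so the case $n=1$ of Part (2) should be checked directly; it is immediate). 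Both arguments are sound.
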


\begin{proof} Since $H(2n,n)=G_{n}^2(x)$, we have that   $g^n(x)H(1,1)+\sum_{j=1}^{n} d(x)g^{n-j}G_{j}^2(x)$ equals
	
\begin{eqnarray*}
	\sum_{j=1}^{n} d(x)g^{n-j}H(2j,j)
	&=&g^{n-1}(x)(g(x)H(1,1)+d(x)H(2,1))+\sum_{j=2}^{n} d(x)g^{n-j}G_{j}^2(x)\\
	&=&g^{n-1}(x)H(3,1)+d(x)g^{n-2}H(4,2)+\sum_{j=3}^{n} d(x)g^{n-j}G_{j}^2(x)\\
	&=&g^{n-2}(x)H(5,3)+\sum_{j=3}^{n} d(x)g^{n-j}G_{j}^2(x)\\
	&=&g^{n-2}(x)G_{3}(x)G_{2}(x)+\sum_{j=3}^{n} d(x)g^{n-j}G_{j}^2(x).
\end{eqnarray*}
Similarly we obtain that
\begin{equation}\label{general}
\sum_{j=1}^{n} d(x)g^{n-j}G_{j}^2(x)=H(2n+1,n+1)=G_{n+1}(x)G_{n}(x)-g^n(x)G_{1}(x)G_{0}(x).
\end{equation}
Note that
\begin{eqnarray*}
	\sum_{j=2}^{2n+1} g^{2n+1-j}(x)G_{j}^{'2} (x)&=&\sum_{j=1}^{n} g^{2n-j}(x)(G_{2j+1}^{'2}(x)+ g(x)G_{2j}^{'2} (x))\\
	&=&\sum_{j=1}^n g^{2n-j}(x)(G_{4j+1}^{'}(x)G_1^{'}(x)+G_{0}^{'}(x)G_{4j}^{'})\\
	&=&\sum_{j=1}^n g^{2n-j}(x)G_{4j+1}^{'}(x).
\end{eqnarray*}
This, the equation \eqref{general}, and $G_0(x)=0$ completes the proof.
\end{proof}

\section{Numerical types of Hosoya triangle}

In this section we study some connections of the Hosoya polynomial triangles with some numeric sequences that maybe found in \cite{sloane}.
We show that when we evaluate the entries of a Hosoya polynomial triangle at $x=1$ they give a triangle that is in \url{http://oeis.org/}.
The first Hosoya triangle is the classic Hosoya triangle formerly  called Fibonacci triangle.

We now introduce some notation that is used in Table \ref{Tableahosoyatriangles}. Let $H_{F}(x)$
denote the Hosoya polynomial triangle with products of Fibonacci polynomials as entries. Similarly
we define the notation for the Hosoya polynomial triangle of the other types
--Chebyshev polynomials, Morgan-Voyce polynomials, Lucas polynomials, Pell polynomials,
Fermat polynomials, Jacobsthal polynomials--. The star of David property holds obviously for all these
numeric triangles.

\begin{table}[!ht]
\begin{center}
\begin{tabular}{|l|l|l|l|l|l|l|l|}
\hline
Type triangle 		& Notation 		&  Entries 					& Sloane 			  	\\ \hline
Fibonacci 	    	& $H_F(1)$		& $F_k(1)F_{r-k}(1)$      		&  \seqnum{A058071} 	\\
Lucas 	    		&$H_D(1)$		& $D_k(1)D_{r-k}(1)$      		&\seqnum{A284115} 	\\
Pell		    	& $H_P(1)$		& $P_k(1)P_{r-k}(1)$      		&  \seqnum{A284127} 	\\
Pell-Lucas    		&$H_Q(1)$		& $Q_k(1)Q_{r-k}(1)$      		&  \seqnum{A284126}	\\
Fermat 	    		&$H_{\Phi}(1)$	& $\Phi_k(1)\Phi_{r-k}(1)$		& \seqnum{A143088} 	\\
Fermat-Lucas 		&$H_{\vartheta}(1)$	& $\vartheta_k(1)\vartheta_{r-k}(1)$ & \seqnum{A284128} \\
Jacobsthal   		&$H_J(1)$		& $J_k(1)J_{r-k}(1)$      		&\seqnum{A284130} 	\\
Jacobsthal-Lucas 	& $H_j(1)$		& $j_k(1)j_{r-k}(1)$      		& \seqnum{A284129} 	\\
Morgan-Voyce		& $H_B(1)$		& $B_k(1)B_{r-k}(1)$      		&\seqnum{A284131}		\\
Morgan-Voyce 		&$H_C(1)$		& $C_k(1)C_{r-k}(x)$      		&\seqnum{A141678}		\\
\hline
\end{tabular}
\end{center}
\caption{Numerical Hosoya triangles present in Sloane \cite{sloane}.} \label{Tableahosoyatriangles}
\end{table}

We also observe some curious numerical patterns when we compute the GCD's of the coefficients of polynomials discussed in this paper.
In particular, the GCD of the coefficients of $\Phi_n(x)$ --the $n$th Fermat polynomial-- is $3^{a_{n}}$ where $a_{n}$
is the $n$th element of \seqnum{A168570}. The GCD of the coefficients of $\vartheta_n(x)$ --the $n$th
Fermat-Lucas polynomial-- is $3^{a_{n}}$ where $a_{n}$ is the $n$th
element of \seqnum{A284413}. We also found that the GCD of the coefficients of the $P_{2n}(x)$ --the $2n$th
Pell polynomial-- is $2^{a_{n}}$ where $a_{n}$ is the $n$th element of \seqnum{A001511}.
Finally, the GCD of the coefficients of the $U_{n}(x)$ --the $n$th Chebyshev's polynomial of second kind--
is $2^{a_{n}}$ where $a_{n}$ is the $n$th element of \seqnum{A007814}.

\section{Acknowledgement}

The first and last authors were partially supported by The Citadel Foundation.

\bigskip
\hrule
\bigskip

\noindent 2010 {\it Mathematics Subject Classification}:
Primary 11B39; Secondary 11B83.

\noindent \emph{Keywords: }
Hosoya triangle, Gibonomial triangle, Fibonacci polynomial, Chebyshev polynomials,
Morgan-Voyce polynomials, Lucas polynomials, Pell polynomials, Fermat polynomials.

\end{document}